\documentclass[11pt]{amsart}
\usepackage[utf8]{inputenc}

\usepackage{amssymb}
\usepackage{graphics}
\usepackage{graphicx}
\usepackage{amscd}
\usepackage{color}
\usepackage{amsmath, amsthm, epsfig,  psfrag}
\usepackage[dvipdfm]{hyperref}
\hypersetup{
    colorlinks,
    citecolor=black,
    filecolor=black,
    linkcolor=black,
    urlcolor=black
}
\usepackage{pinlabel}

\newtheorem{theorem}{Theorem}[section]
\newtheorem{lemma}[theorem]{Lemma}
\newtheorem{prop}[theorem]{Proposition}
\newtheorem{cor}[theorem]{Corollary}

\theoremstyle{definition}
\newtheorem{definition}[theorem]{Definition}

\renewcommand{\d}{\partial}

\newcommand{\h}{\widehat}
\newcommand{\HFK}{\h{HFK}}
\newcommand{\CFK}{\h{CFK}}

\newcommand{\D}{\mathcal{D}}

\newcommand{\hthe}{\hat{\theta}}

\newcommand{\CC}{\mathbb{C}}

\newcommand{\RR}{\mathbb{R}}
\newcommand{\ZZ}{\mathbb{Z}}
\newcommand{\HH}{\mathbb{H}}

\newcommand{\ti}{\tilde}

\textwidth=5.7in %15.0cm
\hoffset=-1.0cm

\author{Olga Plamenevskaya}
\address{Department of Mathematics, SUNY Stony
  Brook, Stony Brook, NY 11794}
\email{olga@math.stonybrook.edu}

\title{Transverse invariants and right-veering}

\begin{document}

\begin{abstract} 
 A closed braid $\beta$ naturally gives rise to a transverse link $L$ in the standard contact 3-space. We study the effect of the dynamical properties of the monodromy of $\beta$, such as right-veering, on the contact-topological properties of $L$  and the values of transverse invariants in 
 Heegaard Floer and Khovanov homologies. In particular, for a 3-braid $\beta$, we show that  $\hthe(L) \in \HFK(m(L))$ is non-zero if and only if $\beta$ is right-veering. 
For higher index braids,  $\hthe(L)\neq 0$  whenever $\beta$ has  the fractional
Dehn twist coefficient $C>1$. We use grid diagrams and the structure of
Dehornoy's braid ordering.
\end{abstract}

\maketitle

\vspace{-.3cm}

\section{Introduction}

Links that are transverse to the contact planes in the standard contact 3-space 
$(\RR^3, \xi_{std})= \ker (dz+ r^2 d\varphi)$ can be
described via braids around the $z$-axis (up to a stabilization
equivalence). In this paper, we would like to see how  
dynamics of the braid monodromy affects the contact-topological properties of
the corresponding transverse link. We focus on transverse link invariants
that live in Heegaard Floer and Khovanov homologies of the link $K$: the
transverse invariant $\hthe(K) \in \HFK (m(K))$ \cite{OST} as well as the
invariant $\psi(K) \in Kh(K)$ \cite{Pla-Kh}. A lot of previous research was
aimed at using these invariants to detect {\em transverse non-simplicity}, i.e.
to show that certain transverse knots are not isotopic despite having the same
smooth type and self-linking number. The Heegaard Floer homological invariant
$\hthe$ detects many such  pairs, but it is  unknown whether the
Khovanov homological invariant $\psi$ is effective.

The purpose of this paper is to clarify the
qualitative  meaning of transverse
invariants of an {\em individual} transverse knot. It turns out  that the
behaviour of $\hthe\in \HFK$ is related to certain dynamical properties of the
braid monodromy (acting on a disk $\D$ with punctures). A braid is {\em right-veering} if its  
monodromy sends {\em every} arc connecting $\d \D$ to one of the punctures
to the right of itself; see Section \ref{defns} for a
detailed definition. (Otherwise, we say that the braid is non-right-veering.)  
The following proposition is known and has motivated the
present paper.

\begin{prop} (cf. \cite{BG,Pla-Kh,BVV}) \label{vanish} Suppose $K$ is a
transverse link with a
non-right-veering braid representative. Then $\hthe(K)=0$ and $\psi(K) =0$.  
\end{prop}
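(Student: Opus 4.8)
The plan is to reduce the proposition to a combinatorial normal form for non-right-veering braids, on which the vanishing of $\psi$ follows by a short induction from Plamenevskaya's work and the vanishing of $\hthe$ follows from the contact-geometric meaning of the invariant. Write $K=\hat\beta$ for the given non-right-veering braid $\beta$.

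By the definition recalled in Section \ref{defns}, $\beta$ non-right-veering means its monodromy carries some essential arc $a\subset\D$ (from $\d\D$ to a puncture) strictly to the left of itself. Putting $a$ into standard position by a homeomorphism of $\D$ conjugates $\beta$, and since $\hthe(K)$ and $\psi(K)$ depend only on the transverse link $K$, hence only on $\beta$ up to conjugation, this is harmless. After the normalization the condition ``$\beta(a)$ lies to the left of $a$'' becomes the statement that $\beta$ admits a braid word which is $\sigma$-negative in Dehornoy's sense: the lowest-index generator $\sigma_i$ occurring in the word occurs only with exponent $-1$. Discarding the split unknot components carried by the strands that meet no crossing---harmless here, since both $\hthe$ and $\psi$ are multiplicative under split unions---I may assume that generator is $\sigma_1$, occurring $k\ge1$ times as $\sigma_1^{-1}$. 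Extracting this normal form from the arc definition is the substantive point, and is precisely where the structure of Dehornoy's ordering enters; I would quote it from \cite{BG} (see also \cite{Pla-Kh}).

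For $\psi$ I induct on $k$. If $k=1$ the word reads $A\sigma_1^{-1}B$ with $A,B$ words in $\sigma_2,\dots,\sigma_{n-1}$; cyclically permuting, $\beta$ is conjugate to $\sigma_1^{-1}(BA)$ with $BA$ supported on strands $2,\dots,n$, so $\hat\beta$ is a negative Markov stabilization and $\psi(K)=0$ by Plamenevskaya's theorem that $\psi$ vanishes on negative stabilizations \cite{Pla-Kh}. If $k\ge2$, fix an occurrence of $\sigma_1^{-1}$, giving a negative crossing $c$ in the standard closed-braid diagram of $\hat\beta$, and apply the unoriented skein exact sequence at $c$. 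Because $c$ is negative, its oriented resolution is the $1$-smoothing, which is exactly the closed braid $\widehat{\beta'}$ obtained by deleting that letter from the word---again in $\sigma$-negative normal form, now with $k-1$ occurrences of $\sigma_1^{-1}$. The distinguished cycle representing $\psi(\hat\beta)$ is supported on the oriented resolution, hence lies in the subcomplex of $CKh(\hat\beta)$ cut out by the $1$-smoothing at $c$, which is identified with $CKh(\widehat{\beta'})$; there it represents $\psi(\widehat{\beta'})$. Thus $\psi(\hat\beta)$ is the image of $\psi(\widehat{\beta'})$ under the inclusion-induced map, which vanishes by the inductive hypothesis.

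For $\hthe$ I would not use skein sequences but pass to contact geometry: by \cite{BVV} the class $\hthe(K)$ is identified with a contact invariant attached to $\beta$---the contact class of the complement of $K$ with its induced open book of page $\D$ and monodromy $\beta$---and this class vanishes precisely when $\beta$ is not right-veering, so $\hthe(K)=0$. (Alternatively one could try to mirror the Khovanov induction above, using the unoriented skein triangle in knot Floer homology together with the grid-complex model of $\hthe$ from \cite{OST}, the $k=1$ case being the vanishing of $\hthe$ under a negative stabilization, witnessed by a small empty rectangle with $\x^+$ on its boundary; but this is more delicate.) In either approach the main obstacle is already the first step: extracting the $\sigma$-negative normal form from the arc definition of non-right-veering carries the real content, after which both vanishing statements reduce to results in the cited literature.
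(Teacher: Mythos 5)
Your treatment of $\psi$ is correct and is essentially the paper's argument: the normal form (conjugate word containing $\sigma_1^{-1}$ but no $\sigma_1$, Proposition \ref{sigma-negative}, quoted from \cite{BG}), the vanishing of $\psi$ for negative (transverse) stabilizations \cite{Pla-Kh}, and functoriality under adding a negative crossing, which you re-derive from the subcomplex structure of the cube of resolutions rather than citing it (Proposition \ref{functo}); that re-derivation is fine. (The detour through split unknot components is unnecessary: the normal form of \cite{BG} already gives a $\sigma_1$-negative word, and multiplicativity of $\hthe$ under split unions is not something you can cleanly cite, so better to avoid it.)

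The gap is in the $\hthe$ half. First, you overstate the result of \cite{BVV}: they identify $\hthe$ with a braid-adapted invariant and prove it \emph{vanishes} when the braid is not right-veering, but not that it ``vanishes precisely when'' the braid is not right-veering --- the converse is open in general and is exactly what Theorems \ref{3b} and \ref{C>1} of this paper partially establish; only the vanishing direction is needed here, so drop the ``precisely''. Second, and more substantively, the notion of right-veering used in \cite{BVV} is defined via arcs with \emph{both} endpoints on $\d \D$, whereas your hypothesis produces a non-right-veering arc from $\d \D$ to a puncture. To invoke \cite{BVV} you must show that such an arc yields a boundary-to-boundary arc sent to the left of itself (e.g.\ by ``doubling'' the arc along the boundary of a small neighborhood and checking that no new bigons appear after pulling taut); you do not address this, and it is the only nontrivial step in that route. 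Alternatively --- and this is the paper's actual proof --- your induction for $\psi$ works verbatim for $\hthe$ with no skein-triangle subtleties: vanishing of $\hthe$ under transverse stabilization is in \cite{OST} (Proposition \ref{stabil}), and functoriality under adding a negative crossing is Baldwin's comultiplication result \cite{Baldwin} (Proposition \ref{functo}(1)), so the same removal of all but one $\sigma_1^{-1}$ gives $\hthe(K)=0$ without appealing to \cite{BVV} at all.
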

We establish a partial converse to this fact:
\begin{theorem} \label{3b} Suppose that $K$ is a transverse 3-braid. Then $K$ is
right-veering if and only if $\hthe(K)\neq 0$.
\end{theorem}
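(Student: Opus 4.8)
The plan is to combine the known vanishing result (Proposition~\ref{vanish}) with an explicit computation in the direction where $\beta$ is right-veering. Since one implication is already covered by Proposition~\ref{vanish}, the content of Theorem~\ref{3b} is: if a transverse $3$-braid $\beta$ is right-veering, then $\hthe(K)\neq 0$. The natural strategy is to use the Birman--Menasco classification of $3$-braids up to conjugacy together with Murasugi's (or the Birman--Menasco) normal form, reducing the problem to a finite list of conjugacy classes of $3$-braids, grouped according to whether they are right-veering. Concretely, a $3$-braid is conjugate to a word of the form $h^d \sigma_1^{a_1}\sigma_2^{-b_1}\cdots\sigma_1^{a_k}\sigma_2^{-b_k}$ with $a_i,b_i>0$ (plus the exceptional families $h^d\sigma_1^{a}$, $h^d\sigma_2^{-b}$, $h^d$), where $h=(\sigma_1\sigma_2)^3$ is the full twist. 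The fractional Dehn twist coefficient $C$ of such a braid is essentially $d$ (up to the correction from the $\sigma_1\sigma_2$ and $\sigma_1^a$ pieces), so right-veering corresponds to a concrete inequality on $d$ and the exponents, and I would first pin down exactly which normal forms are right-veering using the definition in Section~\ref{defns}.

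Next I would set up the grid-diagram / transverse-invariant computation. For braids we have an explicit grid presentation, and $\hthe(K)$ is represented by a specific generator $\x^+$ (or $\x^-$) of the grid complex $\CFK(m(K))$; by the work of Ng--Ozsv\'ath--Szab\'o--Thurston and the known transformation rules, adding positive stabilizations and positive (Markov) destabilizations does not change $\hthe$, and a positive braid band generator only makes the invariant ``more likely'' to survive. The key technical input is the interaction with \emph{Dehornoy's braid ordering}: a braid $\beta$ is $\sigma$-positive (Dehornoy-positive) exactly when it is ``bigger than the identity,'' and $\sigma$-positive braids are right-veering. Using the subword/handle-reduction structure of Dehornoy's order on $B_3$, I would show that for a right-veering $3$-braid the associated grid state $\x^+$ is actually a cycle whose homology class is nonzero: concretely, I would identify the differentials in the grid complex that could kill $\x^+$ and show, using the normal form, that the relevant domains either do not exist or cancel in pairs. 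In the ``large twisting'' cases ($d$ large, equivalently $C>1$) the relevant domains are forced to have nonnegative-but-not-all-zero multiplicities that obstruct any disk from having $\x^+$ as an output, matching the higher-index statement in the abstract.

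The main obstacle, I expect, is the \emph{boundary case}: the $3$-braids that are right-veering but have fractional Dehn twist coefficient exactly at the threshold (roughly $C=1$, i.e.\ the words $h\sigma_1^{a_1}\sigma_2^{-b_1}\cdots$ and the exceptional families $h\sigma_2^{-b}$, $\sigma_1^a$, etc.). For these, neither ``$\beta$ is $\sigma$-positive'' nor a crude positivity-of-domains argument is enough; one must do a genuine, if small, computation of $\HFK$ (or at least of the relevant portion of the grid differential) for each such family and check that $\x^+$ survives. This is where grid diagrams do the real work: I would draw the minimal grid for each boundary conjugacy class, list all rectangles connecting $\x^+$ to other generators in the correct Maslov and Alexander grading, and verify $\partial \x^+ = 0$ and $\x^+\notin \mathrm{im}\,\partial$. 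A secondary subtlety is conjugation-invariance: $\hthe$ is a transverse invariant so it only depends on the braid up to conjugacy and positive stabilization, but one must make sure the chosen grid representative of each normal form actually realizes the transverse link of $\beta$ (not a negative stabilization of it), so I would track the self-linking number / writhe bookkeeping throughout. Finally, I would assemble the cases: non-right-veering $\Rightarrow\hthe=0$ by Proposition~\ref{vanish}; right-veering with $C>1\Rightarrow\hthe\neq0$ by the positivity argument; right-veering with $C\le 1$ handled by the finite check above; concluding the ``if and only if''.
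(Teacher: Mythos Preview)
Your overall plan---Murasugi normal form plus a grid computation---is the same skeleton as the paper's proof, but you are missing the one observation that makes the argument clean, and you are importing machinery (FDTC, Dehornoy order) that is not needed here.

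The paper does not split into ``$C>1$'' versus ``boundary $C\le 1$'' at all. After Murasugi's classification, one notices immediately which normal forms are non-right-veering (those where some $\sigma_i$ appears only with negative exponent), and discards them by Proposition~\ref{vanish}. Among the remaining right-veering families, case~(c') is quasipositive and case~(b') with $m\ge 0$ is positive, so $\hthe\neq 0$ is automatic. The crucial point you do not exploit is that \emph{every other right-veering normal form}---all of (a') with $d\ge 1$ and all of (b') with $d\ge 1$, $m<0$---is obtained from a single one-parameter family $h\cdot\sigma_2^{-k}=\sigma_1\sigma_2^2\sigma_1\sigma_2^{-k+2}$ by inserting positive crossings. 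Functoriality (Proposition~\ref{functo}) then reduces the entire proof to a single grid computation for $h\cdot\sigma_2^{-k}$, where one checks directly that no empty rectangle has its upper-left and lower-right corners on the $\hthe$-cycle. Your plan to do a separate grid check ``for each boundary conjugacy class'' would force you to handle infinitely many families (the (a') braids with $d=1$ depend on arbitrarily many parameters $a_1,\dots,a_n$), whereas the functoriality reduction collapses all of this to one picture.

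Two smaller points. First, the distinguished generator representing $\hthe$ is \emph{always} a cycle in the grid complex; there is nothing to prove there, and Dehornoy positivity plays no role in that. The content is entirely in showing it is not a boundary, i.e.\ ruling out empty rectangles of the correct orientation. Second, invoking ``$C>1\Rightarrow\hthe\neq 0$'' as a black box is either circular (that is Theorem~\ref{C>1}, proved later by the same reduction-to-a-model-family technique) or redundant: for 3-braids the functoriality argument already covers all $d\ge 1$ uniformly, so there is no need to separate $C>1$ from $C=1$.
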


\begin{theorem} \label{C>1} Suppose $K$ is a transverse braid with the fractional
Dehn twist coefficient $C>1$. Then $\hthe(K) \neq 0$. 
\end{theorem}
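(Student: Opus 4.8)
The plan is to exploit the comparison between the fractional Dehn twist coefficient and Dehornoy's braid ordering, and then to read off $\hthe$ from a grid diagram adapted to a Dehornoy-positive factorisation of the braid word.

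First I would pass from the hypothesis $C(\beta)>1$ to a statement about the \emph{Dehornoy floor} $\lfloor\beta\rfloor$, the largest integer $m$ with $\Delta^{2m}\preceq\beta$ in Dehornoy's order. By Malyutin's comparison of the fractional Dehn twist coefficient with the Dehornoy floor one has $\lfloor\beta\rfloor\ge C(\beta)-1$, so $C(\beta)>1$ forces $\lfloor\beta\rfloor\ge 1$, i.e. $\Delta^{-2}\beta\succeq 1$. Hence, after conjugation --- which changes neither the transverse link $K$ nor the class $\hthe(K)$ --- the braid word representing $\beta$ can be taken of the form $\Delta^{2}w$ where $w$ is a $\sigma$-positive word (there is an index $i$ with $w$ written in $\sigma_i,\dots,\sigma_{n-1}$ and $\sigma_i$ occurring only with positive exponents), or $w$ is trivial. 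The case $w$ trivial, where $\beta=\Delta^2$ is a positive (indeed quasipositive) braid, is covered by the known non-vanishing of $\hthe$ for quasipositive braid closures, so I would assume $w$ is genuinely $\sigma_i$-positive.

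Next I would build a grid diagram $G$ for $K$ from the factorisation $\Delta^2 w$, putting the full twist in a standard block and realising $w$ by the usual braid-word-to-grid recipe, and track the distinguished generator $x^+$ whose homology class in $\HFK(m(K))$ is $\hthe(K)$. The core of the argument is to show $[x^+]\neq 0$. I would use the $\sigma_i$-positivity of $w$ to organise the grid complex: set up an auxiliary filtration (or pass to a quotient complex) indexed by the horizontal position of the strand carrying the lowest active generator $\sigma_i$, arranged so that $x^+$ occupies the top filtration level by itself. Because the letter $\sigma_i$ never appears with negative exponent, no rectangle counted by the grid differential can move the relevant strand back past the others, so every rectangle that could have $x^+$ as its target is obstructed --- this is the vanishing mechanism behind Proposition \ref{vanish} run in reverse. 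The full twist $\Delta^2$ is exactly what makes this work uniformly in the braid index: it provides enough room to isolate the strand in question on one side of the diagram throughout $w$.

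I expect the main obstacle to be this last step --- converting the order-theoretic content ``$w$ is $\sigma$-positive'' into a combinatorial statement about the grid complex that pins $x^+$ down, uniformly over all braid indices $n$ and all such $w$; arranging the filtration to be compatible with the differential, and not merely with the generators, is the delicate part. A secondary technical point is checking that the grid moves used to bring $G$ into convenient form (commutations and (de)stabilisations) really do fix $\hthe$, which follows from the invariance results of \cite{OST}. If the filtration argument proves unwieldy, an alternative is to prove a naturality statement for the transverse invariant under attaching $\sigma$-positive bands and deduce $\hthe(K)\neq 0$ by comparison with the closure of $\Delta^2$ alone; but this essentially repackages the same monotonicity and is unlikely to be shorter.
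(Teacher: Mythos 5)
Your first half coincides with the paper's route: Malyutin's comparison (Lemma \ref{CvsD}) turns $C>1$ into $\lfloor\beta\rfloor_D\ge 1$, so $\beta=\Delta^2\gamma$ with $\gamma\succeq 1$, and hence $\gamma$ admits a word that is $\sigma_1$-positive or $\sigma_1$-free, i.e.\ containing no $\sigma_1^{-1}$. The gap is in the second half. Your plan is to work directly with a grid diagram for $\Delta^2\gamma$ and to argue that $\sigma$-positivity of $\gamma$ obstructs every empty rectangle having the distinguished generator $x^+$ as its target. That claim is not justified and is not true for an arbitrary grid presentation of such a braid: positive letters, and negative letters at levels $\ge 2$ (which $\gamma$ is allowed to have in abundance), do in general create empty rectangles whose corners land on the $\hthe$-cycle, so non-vanishing cannot be read off as ``no incoming differentials'' except for very specially chosen diagrams. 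Moreover, the mechanism you cite as ``Proposition \ref{vanish} run in reverse'' does not exist: that vanishing result is proved by stabilization plus functoriality, not by a rectangle count. The auxiliary filtration you propose --- by the horizontal position of the strand carrying the lowest active generator, compatible with the differential and isolating $x^+$ at the top --- is precisely the content that would have to be constructed, and you give no argument for it; as written it is the whole theorem.

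The paper closes this gap by a different device: using functoriality of $\hthe$ under positive crossing resolution (Proposition \ref{functo}), it reduces any $\beta=\Delta^2\gamma$ with no $\sigma_1^{-1}$ to an explicit universal family of model braids $\beta_{k,n}=(\sigma_1\cdots\sigma_{n-1})(\sigma_{n-1}\cdots\sigma_1)(\sigma_2\cdots\sigma_{n-1})^{-k}$, from which $\beta$ is obtained by inserting positive crossings only; then, for one concrete grid diagram of each $\beta_{k,n}$, it checks by inspection that no empty rectangle hits the $\hthe$-cycle (Theorem \ref{D>1}). Note also that your fallback --- naturality under attaching $\sigma$-positive bands and comparison with the closure of $\Delta^2$ alone --- runs into a directionality problem: non-vanishing propagates when positive crossings are \emph{added} or negative crossings are \emph{deleted} from the comparison braid, so to conclude anything about $\Delta^2\gamma$ you may only delete its positive letters, which leaves the negative letters of $\gamma$ in place and does not reach $\Delta^2$. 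This is exactly why the paper's model braids carry the arbitrarily long negative block $(\sigma_2\cdots\sigma_{n-1})^{-k}$, and why the hard computational input is the non-vanishing of $\hthe$ for those closures rather than for $\Delta^2$.
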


The fractional Dehn twist coefficient measures the amount of
positive rotation the braid monodromy makes around the boundary of the disk. The
condition $C>1$  means, roughly, that the braid has more positive rotation than
a full positive twist on all strands. We will give the precise definition of
FDTC in Section \ref{FDTC}.

It is well-known that right-veering property of diffeomorphisms of surfaces
plays an important role in 3-dimensional contact topology. Namely, by Giroux's
theorem contact structures can be described via compatible open books, 
and tight contact structures are exactly those for which  {every} compatible
open book is right-veering \cite{HKM}.  A product of positive Dehn twists
is a special case of a right-veering diffeomorphism; an open book with such
monodromy supports a Stein fillable contact structure.  Some of the properties
of the monodromy of an open book are reflected, via the compatible contact
structure,  by the Ozsv\'ath--Szab\'o contact invariant \cite{OS-cont}. More
precisely, this invariant
vanishes for open books with non-right-veering monodromy (i.e. for overtwisted
contact structures), and is always non-zero for monodromies which are products
of positive Dehn twists (i.e. for Stein fillable contact structures). 
The fractional Dehn twist coefficient of the open book
monodromy also carries information on the contact structure. By \cite{HKM2}, 
a contact structure supported by an open book with connected boundary and
pseudo-Anosov monodromy with $C\geq 1$ is isotopic to a perturbation of a taut
foliation, and therefore weakly symplectically fillable. Then by
\cite{OS-genus}, the Heegaard Floer contact invariant (with {\em twisted}
coefficients) is non-zero. 

Our Theorem \ref{C>1} for classical braids is  parallel to the result in the
contact structures context, but our proof is surprisingly different. Taut
foliations and fillability seem irrelevant for braids; instead, we use braid
structure supplied by Dehornoy's theory of braid orderings. This allows to 
prove  non-vanishing of $\hthe$ via grid diagrams. Note that a generalization 
of $\hthe$ for the case of braids in arbitrary open books was given in
\cite{BVV}; the definition is reminiscent of that of the Heegaard Floer contact 
invariant. It would be interesting to find a technique that could be used to
prove both a general version of 
Theorem \ref{C>1} and the result of of \cite{HKM2}.  
An additional subtlety is that our Theorem~\ref{C>1} holds with {\em
untwisted} coefficients (we use $\ZZ/2$ coefficients throughout); in the context
of open books, non-vanishing result with $\ZZ/2$ coefficients has not been
established. (In general, the untwisted version of the Ozsv\'ath-Szab\'o
invariant may vanish for weakly fillable contact structures \cite{Ghi1}.)

Invariants related to braid orderings encode some of the topology of
the braid closure, such as
genus bounds for the underlying knot, existence of certain isotopies, and 
some information on the geometry of the link complement \cite{It1, It2,
MN}. In this paper,  we show that braid orderings are useful in contact
topology. It would be interesting to develop further connections and
applications. 

Right-veering seems important for
certain contact-topological properties of
transverse links,  so we take a closer look at features of right-veering
transverse braids. An important remark is in order: 
it is possible to start with a non-right-veering braid and stabilize it, while 
preserving the transverse link type, to obtain right-veering monodromy
(Proposition \ref{prop-stab-rv}).
We therefore suggest

\begin{definition} Let $K$ be a transverse link in $(S^3, \xi_{std})$. We say
that $K$ is right-veering if {\em every} braid representative of $K$ is a
right-veering braid. Similarly, $K$ is non-right-veering if {\em there exists}
a non-right-veering braid representative for $K$.
\end{definition}

It is interesting to ask if $\hthe(K)$ is always non-zero whenever $K$ is a
right-veering transverse link; at the moment, we do not have any
counterexamples. Another question concerns the behavior of the
Khovanov-homological invariant $\psi(K)$ of a right-veering link $K$. 
The analogs of Theorems \ref{3b} and \ref{C>1} do not hold true for $\psi(K)$,
and there are examples of right-veering transverse links with vanishing $\psi$.
We include a few observations concerning $\psi$ in this paper.

In general, we would like to claim that right-veering transverse links have
certain special
properties. It turns out that right-veering is a
necessary condition for tightness of 
the double cover $(\Sigma(K), \xi_K)$ of $(S^3, \xi_{std})$ branched over $K$,
and for equality in various versions of the
slice-Bennequin inequality. Recall that $sl(K)$ has upper bounds \cite{Ru,
Pla-tb, Pla-Kh, Shu} in terms of the slice 
genus $g^*(K)$ of the knot $K$, the Ozsv\'ath-Szab\'o 
concordance invariant $\tau(K)$, and Rasmussen's concordance invariant $s(K)$:
$$
sl(K) \leq 2 g^*(K)-1, \qquad sl(K) \leq 2 \tau(K) -1, \qquad sl(K) \leq s(K)
-1.
$$
We show that these upper bounds
can only be achieved by right-veering knots.

\begin{theorem} \label{sl-bounds} Suppose that $K$ is a non-right-veering
transverse knot.
Then 

(1) $sl(K) <s -1$, $sl(K)< 2\tau -1$, and $sl(K) < 2 g^*(K)-1$.

(2) The branched double cover $(\Sigma(K), \xi_K)$   is overtwisted.

\end{theorem}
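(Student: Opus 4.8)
The plan is to reduce everything to Proposition~\ref{vanish}. Fix a non-right-veering braid representative $\beta$ of $K$; then that proposition already gives $\hthe(K)=0$ and $\psi(K)=0$. Part~(1) will follow by feeding these two vanishing statements into the refined forms of the slice--Bennequin inequalities, and part~(2) by lifting $\beta$ to the branched double cover and applying the Honda--Kazez--Mati\'c criterion for overtwistedness.

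For part~(1), I would first observe that the genus bound is a formal consequence of the other two: since $\tau(K)\le g^*(K)$, an equality $sl(K)=2g^*(K)-1$ together with $sl(K)\le 2\tau(K)-1\le 2g^*(K)-1$ forces $sl(K)=2\tau(K)-1$. So it is enough to prove that $\hthe(K)=0$ implies $sl(K)<2\tau(K)-1$ and that $\psi(K)=0$ implies $sl(K)<s(K)-1$; together with Proposition~\ref{vanish}, part~(1) follows. These two implications are the contrapositives of the fact that equality in each bound is \emph{detected} by the relevant transverse invariant, which is essentially built into the proof of the bound: $\psi$ is a Khovanov cycle of quantum grading $sl(K)$ whose class in Lee (or Bar--Natan) homology is non-zero and sits at filtration level exactly $s(K)-1$ (cf.\ \cite{Pla-Kh,Shu}), so $sl(K)=s(K)-1$ can hold only if $\psi$ is not already a Khovanov boundary, i.e.\ only if $\psi(K)\ne 0$ in $Kh(K)$; similarly $\hthe$, regarded as a cycle in the filtered complex of $m(K)$, represents the generator of $\widehat{HF}(S^3)$ and sits at the filtration level dictated by $sl(K)$ (cf.\ \cite{OST}), so $sl(K)=2\tau(K)-1$ can hold only if $\hthe(K)\ne 0$ in $\HFK(m(K))$.

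For part~(2), I would use the open book presentation of $(\Sigma(K),\xi_K)$: regarding $\widehat\beta$ as braided around the binding of the disk open book of $(S^3,\xi_{std})$, the branched double cover carries the open book whose page $\Sigma_n$ is the double cover of a disk page branched over the $n$ points in which that page meets $\widehat\beta$, and whose monodromy is the lift $\widetilde\phi_\beta$ of the braid monodromy $\phi_\beta$ (the lift exists because $\phi_\beta$ permutes the branch points and is the identity near $\d\D$). Non-right-veering of $\beta$ provides an arc $a$ from $\d\D$ to a branch point with $\phi_\beta(a)$ to the left of $a$; its preimage $\widetilde a$ is a properly embedded arc in $\Sigma_n$ (two sheets meeting at the lifted branch point), $\widetilde\phi_\beta(\widetilde a)$ is the preimage of $\phi_\beta(a)$, and since the branched cover is an honest orientation-preserving double cover near $\d\D$ while the deck involution interchanges the two endpoints of $\widetilde a$ preserving orientation, $\widetilde\phi_\beta(\widetilde a)$ lies to the left of $\widetilde a$ at each endpoint. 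Hence $\widetilde\phi_\beta$ is not right-veering, and by \cite{HKM} the contact structure $\xi_K$ it supports is overtwisted.

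The step I expect to be the main obstacle is the ``detection'' statement for $\hthe$ in part~(1): one must identify the Alexander (and Maslov) grading of $\hthe$ in $\HFK(m(K))$ and check that its refinement to the filtered complex genuinely represents the generator of $\widehat{HF}(S^3)$ --- the Heegaard Floer analogue of Plamenevskaya's computation of the Lee-class of $\psi$. I would establish this directly from a grid diagram of $m(K)$, which is the only point at which the grid machinery of this paper enters. A minor technical issue in part~(2) is to confirm that $\widetilde\phi_\beta(\widetilde a)$ still veers left after being isotoped into minimal position; this follows from the standard robustness of the right-veering criterion near the boundary, treated separately in the cases $n$ even and $n$ odd (so that $\d\Sigma_n$ has two components, respectively one). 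Finally, note that the fractional Dehn twist coefficient is too coarse for part~(2), since a non-right-veering braid may have $C(\beta)=0$; the arc-lifting argument is genuinely needed.
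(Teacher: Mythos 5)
Your overall architecture is close to the paper's, but the two halves fare differently. For part (1), the $\tau$-bound is handled exactly as in the paper (Proposition \ref{vanish} plus ``$sl(K)=2\tau-1$ forces $\hthe(K)\neq 0$''), except that the paper justifies the detection step via $\theta^-\in HFK^-(m(K))$: $\theta^-$ is always non-zero, $\hthe=0$ iff $\theta^-\in\operatorname{im}U$, and at equality $\theta^-$ sits at the Alexander grading of the top of the $\ZZ[U]$-tower, so it cannot be in $\operatorname{im}U$ (following \cite[Prop.~6]{NOT}, whose thinness hypothesis is not needed). Your justification --- that $\hthe$ ``represents the generator of $\widehat{HF}(S^3)$ at the filtration level dictated by $sl$'' --- is not a fact you can lean on as stated: non-vanishing of $\hthe$ is a statement about the associated graded object, and $\hthe$ vanishes for stabilized braids, so any unconditional ``represents the generator'' claim is suspect; your proposed grid-diagram verification would in effect have to reproduce the $\theta^-$/$U$-tower argument. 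For the $s$-bound you take a genuinely different route: the paper does not use $\psi$ at all, but instead uses Corollary \ref{non-rv-form} to delete all but one $\sigma_1^{-1}$, producing a cobordism to a negatively stabilized braid $L'$, and combines $|s(L)-s(L')|\leq-\chi(S)$ with the strictness of $sl(L')<s(L')-1$ for a stabilization. Your route ($\psi=0$ plus ``$sl=s-1$ forces $\psi\neq 0$'') can be made to work, but the detection lemma is asserted rather than proved and is stated imprecisely: the $\psi$-cycle is not itself a Lee cycle; the correct argument is that it is the leading term of the canonical Lee class, so if it were a Khovanov boundary that class would have filtration level at least $sl+2$, contradicting $s-1$. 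You must also mind coefficients: the paper works over $\ZZ/2$, Lee theory requires $1/2$, and the $\ZZ/2$ Bar--Natan analogue of the concordance invariant need not equal Rasmussen's $s$; the fix is to run the vanishing argument for the rational $\psi$, which the braid-word argument permits. The paper's cobordism argument is more elementary, needs no detection statement, and extends to links; the reduction of the $g^*$-bound to the $\tau$-bound via $\tau\leq g^*$ is fine and matches the paper.

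For part (2) your approach is the paper's (lift the non-right-veering arc to the branched double cover of the page and apply \cite{HKM}), but you dismiss the one point of substance. ``To the left'' is only defined after the arcs are pulled taut, and although $a$ and its image are in minimal position downstairs, their lifts could a priori form bigons upstairs; your appeal to ``standard robustness of the right-veering criterion near the boundary'' and the even/odd case split do not address this, since a bigon between the lifts need not be located anywhere near $\partial$. This is precisely where the paper's proof of Proposition \ref{branch} does its work: such a bigon cannot cover a bigon downstairs, hence contains exactly one branch point; invariance under the deck involution then forces both lifted arcs through that branch point, splitting the bigon into two bigons which project to bigons between the arcs downstairs, contradicting minimal position. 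Without this (or an equivalent) argument, your part (2) has a genuine gap.
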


These facts are straightforward observations or corollaries of known results; 
we collect them together to create a coherent picture. The converse statements
to those of
Theorem \ref{sl-bounds} are not true (see Proposition \ref{counterex}),
although 
it is well-known that when $K$ is a {\em quasipositive} braid, the above bounds
for the self-linking number become equalities, and the branched double cover
is tight (even Stein fillable). We discuss this further in Section
\ref{section-nonrv}.

It might be tempting to think of the right-veering vs. non-right-veering
transverse knots as a dichotomy analogous to that of tight vs. overtwisted
contact structures, but this analogy does not go very far. Indeed, the main
feature of overtwisted contact structures is that they satisfy an $h$-principle
\cite{El-ot}, in particular, two overtwisted contact structures are isotopic
whenever their underlying plane fields are homotopic. In the case of transverse
knots, an $h$-principle would say that non-right-veering knots are transversely
simple
(i.e. the transverse isotopy type would be determined by the smooth knot and
the self-linking number), but this is  not true: some of the transversely
non-simple braids given in \cite{BM} are not right-veering.

The paper is organized as follows. In Section 2, we review the necessary
definitions and background. In Section 3, we prove Theorem \ref{sl-bounds}
along with some other easy properties of non-right-veering braids. 
In Section 4, we discuss 3-braids and prove Theorem
\ref{3b}. Section 5 contains the proof of Theorem \ref{C>1} and the related
discussion of Dehornoy's floor function and the fractional Dehn twist
coefficient.

\vspace{2mm}

 {\bf Acknowledgements.} The author is grateful to John Baldwin, John Etnyre,
Marco Golla, and Jeremy Van Horn-Morris for helpful conversations.

\section{Background} \label{defns}

\subsection{Braids and transverse links} \label{braids} As usual, we will write
the elements of the Artin braid group $B_m$ (braids on $m$ strands) as braid
words on standard generators, $\sigma_1$, $\sigma_2$, \dots, $\sigma_{m-1}$. We
draw braids from left to right, and consider braid closures as closed  braids
around the $z$-axis in $\RR^3$. (Closed braids correspond to conjugacy classes
in $B_m$.) 

If $\RR^3$ is endowed with the standard contact structure 
$\xi_{std}=\ker(dz + r^2 \, d \phi)$, closed braids around the $z$-axis
naturally give rise to transverse links. Moreover, two braids that produce 
transversely isotopic links are related by braid isotopies and positive Markov
stabilization and destabilization moves \cite{OrSh} (this fact is usually
called
``transverse Markov theorem''). Positive Markov stabilization of a braid 
$\beta \in B_m$ gives the braid $\beta \sigma_m \in B_{m+1}$; geometrically, 
this means adding an extra strand and a ``positive kink'' to the given closed
braid. Negative Markov stabilization  of $\beta$ gives the braid $\beta
\sigma_m^{-1}$ which is {\em not} transversely isotopic to $\beta$. When we
consider the transverse links corresponding to braids,  negative
Markov stabilization is called  transverse stabilization. A
basic invariant of transverse links, self-linking number,
can be computed from a braid via the formula 
$$
sl = \# \{\text{positive
crossings} \} - \#\{\text{negative crossings} \} - (\text{braid index}).
$$ 
Transverse stabilization decreases the self-linking number by 2.

The braid group $B_m$ is naturally identified with a the mapping class group of
a disk with $m$ marked points.  (We will assume that $\D \subset \CC$ is the
standard unit disk, and that the set  $Q = \{x_1, x_2, \dots x_m\}$ of the
marked points lies on the $x$-axis so that $x_1 <x_2<\dots <x_m$.) The braid
group acts on $(\D, Q)$ on the right (fixing $\d \D$).  Each standard generator
$\sigma_i$ acts by a right-handed half-twist interchanging $x_i$ and $x_{i+1}$,
supported in a small neighborhood of the segment $[x_i, x_{i+1}]$.

\subsection{Right-veering property and braid orderings} We now recall the
definition of right-veering braids. For this, we consider the action of the
braid monodromy on arcs that connect 
the boundary $\d \D$ of the disk to one of the marked points in $Q$, while
avoiding all other marked points. We will refer to such an arc as ``an arc in
$(\D, Q)$''. Suppose that $a_1$ and $a_2$ are non-isotopic arcs in $(\D, Q)$ 
that start
at the same point in $\d \D$ and intersect transversely without non-essential
intersections (that is, the two arcs form no bigons in $\D \setminus Q$, so that
no interesections can be removed by an isotopy of the arcs). We will say that
$a_1$ lies to the right of $a_2$ if the pair of tangent vectors 
$(\dot{a}_1, \dot{a}_2)$ at the arcs' initial point induces the original
orientation on the disk $\D$.
For arcs with non-essential intersections, the same definition can be used
after the arcs are {\em pulled taut}, i.e. isotoped to eliminate all
non-essential intersections. Now, let $\beta \in B_m$ be a braid, and write 
$\beta(a)$ for the image of an arc $a$ in $(\D, Q)$ under the action of $\beta$.
(The braid group acts on the right but we abuse notation.)

\begin{definition} Let  $\beta \in B_m$ be a braid.
We say that $\beta$ is right-veering if for  any
arc
$a$ in $(\D, Q)$,  the
arc $\beta(a)$ is either isotopic to $a$ or lies to the right of $a$ (after
the arcs $a$ and $\sigma(a)$ are pulled taut in $\D \setminus Q$).

If there is an arc $a$ such that $\beta(a)$ is to the left of $a$ (i.e. $a$ is
to the right of $\beta(a)$), we say that $\beta$ is non-right-veering.
\end{definition}

In the context of open book decompositions, the notion of right-veering was
developed in \cite{HKM} and applied to the study of contact structures. For
braids, a definition where one considers arcs with both endpoints in $\d \D$
was given in \cite{BVV};  the present definition appears in \cite{BG}. The
general idea of right-veering goes back to W. Thurston.

Using Thurston's approach, the idea of right-veering can be used to 
show that the braid group $B_m$ is orderable. (Namely, there exists a {\em
left-invariant} linear order on $B_m$, so that if $\beta_1 < \beta_2$, then 
 $\gamma \beta_1 < \gamma \beta_2$  for any $\gamma \in B_m$.) Many different
orderings (known as {\em Nielsen--Thurston} orderings) arise from this idea; 
we refer the reader to \cite{De} for a general discussion. In this paper, we
will only need a specific ordering introduced by Dehornoy \cite{De1}.
Historically, Dehornoy was the first one to describe a linear
order on $B_m$; he found it from an algebraic perspective. 
 By definition,
$\beta  \succ 1$ in Dehornoy's order iff the braid $\beta$ admits a
braid word that contains the generator $\sigma_i$ but no
$\sigma_i^{-1}$ and no $\sigma_j$ for $j<i$. (Such words are called
$\sigma$-positive.) For example, we have 
$\sigma_3 \sigma_1^2 \sigma_2^{-5} \sigma_3^{-1} \succ 1$ and $\sigma_2 \sigma_5
\sigma_3^{-2} \sigma_2^2 \succ 1$. Now, we set  $\beta \succ
\beta'$ if $(\beta')^{-1}\beta  \succ 1$, i.e. $(\beta')^{-1}\beta$ admits a
$\sigma$-positive word.  This gives a well-defined
left-invariant order, \cite{De1, De}. (From this perspecitive, it is
non-trivial 
to check, for example, that $\beta$ and $\beta^{-1}$ can never be both 
$\sigma$-positive.)

We will also use some more specific terminology. We say that a braid word 
is $\sigma_i$-positive if it contains at least one letter  $\sigma_i$ but no
$\sigma_i^{-1}$ and no $\sigma_j^{\pm 1}$ with $j<i$.  Similarly, we can define
$\sigma_i$-negative words. We will say that a word is $\sigma_i$-free if 
it contains no $\sigma_j^{\pm 1}$ for $j \leq i$.

Dehornoy's order can be given a geometric interpretation \cite{FGRRW}, by
considering the action of the braid monodromy $\beta \in Map (D, Q)$ on the
$x$-axis. 
(Recall that we assume that the marked points are all on the $x$-axis, 
labeled from left to right.) It turns out that $\beta \succ 1$ in Dehornoy's
order iff the image of the $x$-axis under $\beta$ veers to the right of 
the $x$-axis, when compared at the left endpoint and pulled taut. (Note that the
$x$-axis passes
through marked points, and may coincide with its image on a few initial 
intervals. The image must veer right from the $x$-axis when they eventually
diverge. Indeed, for a $\sigma_1$-positive word, the image of the $x$-axis 
will veer to the right at $\d \D$, but for a word that is $\sigma_2$-free, 
the image will follow the $x$-axis up to the point $x_2$. For a  
$\sigma_3$-positive word, the image of the $x$-axis will veer to the right after
$x_2$.)

It is important to note that the condition 
$\beta \succ 1$ does not imply that $\beta$ is right-veering  (since we do not 
consider images of arbitrary arcs). 
However, if
$\beta$ is non-right-veering, i.e. sends some arc to the left of itself, it has
a
conjugate $\tilde{\beta}$ that sends the initial (leftmost) arc of the $x$-axis
to the left of itself, so $\tilde{\beta} \prec 1$:  

\begin{prop}  \label{sigma-negative} \cite{BG, FGRRW} Any
non-right-veering braid
is conjugate to a braid with a braid word that contains  $\sigma_1^{-1}$
but no $\sigma_1$.  
\end{prop}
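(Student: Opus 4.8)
The plan is to start from an arc $a$ witnessing that $\beta$ is non-right-veering, i.e.\ $\beta(a)$ lies to the left of $a$, and to massage it by conjugation into the leftmost arc of the $x$-axis. First I would recall that the mapping class group $B_m$ acts transitively on the isotopy classes of arcs in $(\D,Q)$ that join $\d\D$ to a marked point while avoiding the others: given any such arc $a$, there is a braid $\gamma\in B_m$ carrying the leftmost segment $a_0$ of the $x$-axis (the arc from $\d\D$ to $x_1$) onto $a$. Replacing $\beta$ by its conjugate $\tilde\beta=\gamma^{-1}\beta\gamma$, we get $\tilde\beta(a_0)=\gamma^{-1}\beta(a)$, which lies to the left of $\gamma^{-1}(a)=a_0$ because $\gamma^{-1}$ is a homeomorphism of $\D$ fixing $\d\D$ pointwise and hence preserves the left/right relation of two arcs emanating from the same boundary point. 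So after conjugation we may assume $\beta(a_0)$ is strictly to the left of $a_0$.

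Next I would translate ``$\beta(a_0)$ lies to the left of $a_0$'' into an algebraic statement about $\sigma$-negativity, invoking the geometric description of Dehornoy's order from \cite{FGRRW} quoted just above: $\beta\succ 1$ iff the image of the $x$-axis veers to the right of the $x$-axis when compared at the left endpoint and pulled taut, and symmetrically $\beta\prec 1$ iff it veers left. The only subtlety is that the Dehornoy criterion is phrased in terms of the whole $x$-axis, not just its initial segment $a_0$; but the comparison is made \emph{at the left endpoint}, so whether $\beta$ is $\succ 1$, $=1$, or $\prec 1$ is determined by the behavior of $\beta$ on a neighborhood of that leftmost boundary point, i.e.\ by where $\beta$ sends $a_0$ relative to $a_0$. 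Since $\beta(a_0)$ departs $a_0$ to the left, the taut image of the $x$-axis veers left, hence $\beta\prec 1$, i.e.\ $\beta^{-1}\succ 1$, so $\beta^{-1}$ admits a $\sigma$-positive word $w$. Reversing each letter, $\beta$ then has the word $w^{-1}$ obtained by inverting $w$ and reading backwards, which is $\sigma$-negative: it contains some $\sigma_i^{-1}$, no $\sigma_i$, and no $\sigma_j^{\pm1}$ with $j<i$. In particular it contains $\sigma_i^{-1}$ but no $\sigma_i$.

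To finish, I would reduce the index $i$ down to $1$ by a further conjugation. If $w^{-1}$ is $\sigma_i$-negative with $i>1$, then $w^{-1}$ uses only the generators $\sigma_i^{\pm1},\dots,\sigma_{m-1}^{\pm1}$, with $\sigma_i^{-1}$ occurring and $\sigma_i$ not occurring. Conjugating by the ``shift'' braid that renumbers the strands—concretely, by a suitable product of band generators sliding the relevant sub-braid so that strand $i$ becomes strand $1$ (equivalently, applying the endomorphism $\sigma_k\mapsto\sigma_{k-i+1}$ after an innocuous stabilization/conjugation that parks the unused strands)—turns a $\sigma_i$-negative word into a $\sigma_1$-negative one, i.e.\ a word containing $\sigma_1^{-1}$ but no $\sigma_1$. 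I expect this last renumbering step to be the main technical nuisance: one has to be careful that conjugation inside $B_m$ really does implement the index shift on the relevant band, rather than needing to pass to a larger braid group; the clean way is to note that a $\sigma_i$-negative braid lies in the subgroup generated by $\sigma_i,\dots,\sigma_{m-1}$, which is itself a braid group $B_{m-i+1}$ on the last $m-i+1$ strands, and then conjugate within $B_m$ by a permutation braid that is the identity on those strands after relabeling—so that the witnessing arc $a_0$ and the generator $\sigma_1$ line up. Everything else is bookkeeping with the definitions of $\sigma$-positive/negative words and the left-invariance of $\prec$. (Alternatively, one could cite \cite{BG, FGRRW} for precisely this conjugated normal form and skip the reduction, which is presumably what the stated proposition does.)
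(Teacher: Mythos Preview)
The paper does not give a proof of this proposition; it is simply cited from \cite{BG, FGRRW}, with a one-line sketch just before the statement (``it has a conjugate $\tilde\beta$ that sends the initial arc of the $x$-axis to the left of itself''). Your first two paragraphs reproduce exactly that sketch, and they are correct: conjugate so that the left-veering arc becomes $a_0$, and use the geometric interpretation of Dehornoy's order.

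Your third paragraph, however, is both unnecessary and problematic. Once you have established that $\tilde\beta(a_0)$ lies \emph{strictly} to the left of $a_0$, you already know $\tilde\beta$ is $\sigma_1$-negative, not merely $\sigma_i$-negative for some $i$. The reason is that the image of the $x$-axis under $\tilde\beta$ begins with $\tilde\beta(a_0)$, so it diverges from the $x$-axis \emph{at the boundary}; by the refined FGRRW criterion quoted in the paper, divergence at $\partial\D$ corresponds precisely to a $\sigma_1$-negative word. Equivalently: any $\sigma_1$-free braid (in particular any $\sigma_i$-negative braid with $i>1$) is a product of half-twists supported away from $a_0$ and therefore fixes $a_0$, contradicting $\tilde\beta(a_0)\not\simeq a_0$. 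So the case $i>1$ simply does not occur.

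Your proposed fix for this non-issue is also shaky on its own terms: you invoke an ``innocuous stabilization\ldots that parks the unused strands'', but stabilization changes the braid group $B_m$ and is not a conjugation, so it cannot be used to prove a statement about conjugacy in $B_m$. If you really needed to shift a $\sigma_i$-negative word to a $\sigma_1$-negative one by conjugation, you would have to argue more carefully (e.g.\ geometrically, by carrying the relevant arc back to $a_0$ again). But as noted, you don't need this step at all; just sharpen the conclusion of your second paragraph and stop there.
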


\begin{cor} \label{non-rv-form} A non-right-veering transverse link can be
represented by a braid
wuth a word that  contains  $\sigma_1^{-1}$
but no $\sigma_1$. 
\end{cor}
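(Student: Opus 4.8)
The plan is to deduce this directly from Proposition~\ref{sigma-negative} together with the fact that conjugate braids represent the same transverse link. Recall from the definition above that calling the transverse link $K$ non-right-veering means precisely that $K$ admits \emph{some} braid representative $\beta \in B_m$ which is a non-right-veering braid. So the first step is simply to fix such a representative $\beta$.

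Next, I would apply Proposition~\ref{sigma-negative} to the braid $\beta$. This produces a braid $\widetilde\beta \in B_m$, conjugate to $\beta$ in $B_m$, admitting a braid word that contains the letter $\sigma_1^{-1}$ but never the letter $\sigma_1$ (in the terminology of Section~\ref{defns}, a $\sigma_1$-negative word). It then remains only to check that $\widetilde\beta$ is again a braid representative of the same transverse link $K$. Since $\widetilde\beta$ is conjugate to $\beta$, the closed braids $\widehat{\beta}$ and $\widehat{\widetilde\beta}$ are related by a braid isotopy; braid isotopy is among the moves of the transverse Markov theorem \cite{OrSh}, so $\widehat{\beta}$ and $\widehat{\widetilde\beta}$ are transversely isotopic, hence both represent $K$. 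Thus $\widetilde\beta$, with its word of the stated form, is the desired representative.

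I do not expect any genuine obstacle: all the substance is already contained in Proposition~\ref{sigma-negative}, which rests on the geometric interpretation of Dehornoy's order \cite{FGRRW}. The only point that requires a moment's care is that the relevant hypothesis is non-right-veering-ness of the \emph{link} (existence of one non-right-veering braid representative), not of a fixed braid, which is exactly what licenses passing from $\beta$ to the conjugate $\widetilde\beta$ without leaving the transverse isotopy class of $K$.
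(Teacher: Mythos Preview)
Your proposal is correct and matches the paper's intent: the corollary is stated immediately after Proposition~\ref{sigma-negative} without an explicit proof, precisely because it follows at once from that proposition together with the observation that conjugate braids represent the same transverse link. Your write-up makes this implicit step explicit, and there is nothing more to add.
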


The proposition above and its corollary will be very useful for us.

\subsection{Transverse invariants} In this paper, we will be mostly concerned
with transverse link invariants in Heegaard Floer homology (although we will
discuss Khovanov-homological invariants as well).  
We now very briefly recall the constructions, referring the reader to \cite{OST,
NOT, KN} for details.

A  grid diagram is an $n \times n$ square grid, marked with X's and O's so
that there is exactly one X and exactly one O in each row and each column. 
A grid diagram gives rise to a braid, as follows. First, consider columns of
the grid. For the columns that have O above X, draw a
vertical segment from X to O. For the columns that have X above O, draw two
vertical segments in this column, one from the bottom of the grid to O, the
other from X to the top of the grid. Now, consider the rows, and draw a
horizontal segment in each column connecting X to O, so that horizontal
segments cross {\em over} the vertical segments. The result is an oriented
braid (running from bottom to top); as in subsection \ref{braids}, 
the braid gives a transverse link $K$. See Figure \ref{ex-grid} for an
illustration.

\begin{figure}[ht]
  \includegraphics[scale=0.9, keepaspectratio]{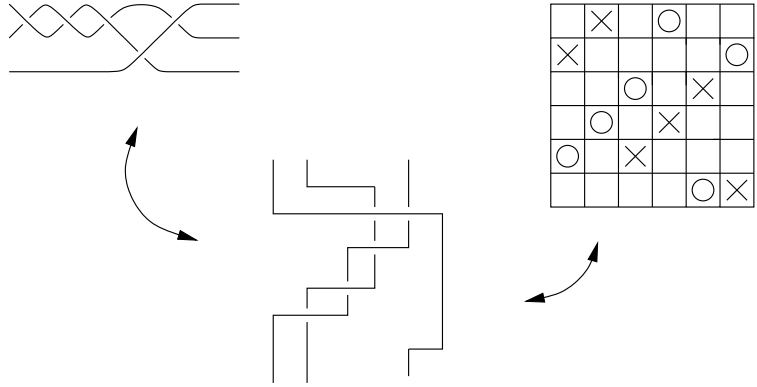}
  \caption{Braids and grid diagrams. The middle picture shows a braid running from bottom to top, constructed from the grid diagram on the right. 
  The picture on the left shows the same braid running from left to right; the braid closure is braided around the (vertical) $z$-axis and represents 
  a transverse link in $(\RR^3, ker(dz +r^2 d \phi))$. }
  \label{ex-grid}
\end{figure}

The grid diagram gives rise to the grid chain complex $\CFK(m(K))$ whose
generators are $n$-tuples of intersection 
points of the grid (the opposite sides of the grid are identified to form a
torus). Here, $m(K)$ is the
mirror knot (the mirroring is needed because one associates  a smooth link to a 
grid diagram in a different way when defining knot Floer homology).
The $n$-tuple given by
the upper right corners of the X markings gives a distinguished cycle in the
complex  $\CFK(m(K))$; the corresponding element  $\hthe \in \HFK(m(K))$
is an invariant of the original transverse knot $K$.

The definition we just gave will be most convenient for our purposes; 
it comes from Khandhawit-Ng braid interpretation
\cite{KN} of the original definition from \cite{OST}. 
The original definition \cite{OST} of the transverse invariant $\hthe(K)$ in
Heegaard Floer
homology starts by  representing the transverse link $K$ in $S^3$ as a push-off
of
a Legendrian link; the latter can be put on a grid, and the corresponding
grid diagram is used to define the transverse invariant (as the class of the
cycle given the upper-right corners of the X's). Note that there are other
ways to define the same invariant. A more topological definition, for
transverse knots in arbitrary contact 3-manifolds, was developed via open books
adapted to the corresponding Legendrian knot in \cite{LOSS}. Finally, yet
another construction, whose input is directly a transverse braid (rather than an
associated Legendrian link), was given in \cite{BVV}, where equivalence of all
the above definitions for transverse knots in $S^3$ was established.
While we work with grid diagrams in this paper, it would
be interesting to reprove and generalize our results from the perspective of
\cite{BVV}. Finally, we remark that there is a related transverse invariant
$\theta^{-}$ in $HFK^-(m(K))$ which potentially contains more information;
however, the hat version will be sufficient for our paper.

The transverse invariant in Khovanov homology, $\psi(K) \in Kh(K)$  was
introduced in \cite{Pla-Kh}. 
For a braid representative of a transverse link $K$, one considers the oriented
braid resolution, and then takes the cycle which is the lowest quantum grading
element in the component of the Khovanov chain complex $CKh(K)$ corresponding
to this resolution. The resulting homology class, $\psi(K)$, is independent of
the braid representative. 

We recall a few properties. 

\begin{prop} \cite{OST, Pla-Kh} Suppose $K_{stab}$ is a transverse stabilization
of
another transverse link, $K$. Then $\hthe(K_{stab})=0$ and $\psi(K_{stab})=0$.
 \label{stabil}
\end{prop}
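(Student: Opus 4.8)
The plan is to reduce the statement to Proposition~\ref{vanish}. Let $\beta \in B_m$ be any braid whose closure is $K$. By the definition of transverse stabilization recalled in Section~\ref{braids}, the transverse link $K_{stab}$ is the closure of the negatively Markov-stabilized braid $\beta' := \beta\sigma_m^{-1} \in B_{m+1}$. So it is enough to exhibit a non-right-veering braid representative of $K_{stab}$ --- concretely, to show that $\beta'$ itself is non-right-veering --- and then Proposition~\ref{vanish} gives $\hthe(K_{stab})=0$ and $\psi(K_{stab})=0$ simultaneously, for any $K$.

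To verify that $\beta'$ is non-right-veering I would produce an arc that it sends to its left. Regarded as an element of $B_{m+1}$, the braid $\beta$ fixes the rightmost marked point $x_{m+1}$ and is supported in a small neighborhood $N$ of $[x_1,x_m]$, while the support of $\sigma_m$ is a small neighborhood of $[x_m,x_{m+1}]$. Choose an arc $a$ in $(\D,Q)$ from $\d\D$ to $x_{m+1}$ that is disjoint from $N$; this is possible because $x_{m+1}$ can be joined to $\d\D$ by an embedded path lying entirely to the right of $N$. Then $\beta(a)=a$, so $\beta'(a)=\sigma_m^{-1}\bigl(\beta(a)\bigr)=\sigma_m^{-1}(a)$, and a direct local computation --- essentially one picture inside the twisted disk --- shows that, after pulling taut, the left-handed half-twist $\sigma_m^{-1}$ carries $a$ to an arc lying to the left of $a$. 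Hence $\beta'$ is non-right-veering. Equivalently, one may first apply the order-reversing relabeling $\sigma_i\mapsto\sigma_{m+1-i}$ of $B_{m+1}$, which does not affect the transverse link type of the closure; it turns $\beta'$ into a braid word containing $\sigma_1^{-1}$ but no $\sigma_1$ (and, vacuously, no $\sigma_j^{\pm 1}$ with $j<1$), i.e.\ a $\sigma_1$-negative word, and by the geometric description of Dehornoy's order recalled above such a braid sends the leftmost arc of the $x$-axis to the left of itself. I expect the identification of $\sigma_m^{-1}(a)$ (equivalently, the handedness bookkeeping in the $\sigma_1$-negative picture) to be the only point needing genuine care, but in either form it is a routine pictorial check.

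This argument is not circular: standard proofs of Proposition~\ref{vanish} do not appeal to the present proposition, since they must also handle non-right-veering braids that admit no Markov destabilization. For completeness I note the original, self-contained arguments that I would otherwise cite. For $\hthe$ one realizes the negative Markov stabilization directly on a grid diagram (as in \cite{KN, OST}) and checks that in the enlarged grid the cycle formed by the upper-right corners of the X's is a boundary. For $\psi$ one resolves the extra negative crossing of $\beta\sigma_m^{-1}$ to present the Khovanov complex $CKh(K_{stab})$ as a mapping cone in which the oriented-resolution generator is the differential of the generator supported on the other resolution of the added kink; this is the argument of \cite{Pla-Kh}.
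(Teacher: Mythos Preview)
Your reduction to Proposition~\ref{vanish} is circular in the context of this paper. Immediately after stating Proposition~\ref{functo}, the paper explicitly says that ``Propositions~\ref{sigma-negative}, \ref{stabil}, and~\ref{functo} combine to immediately give a proof of Proposition~\ref{vanish}.'' In other words, the paper's route to Proposition~\ref{vanish} is: take a non-right-veering braid, conjugate it to a $\sigma_1$-negative word (Proposition~\ref{sigma-negative}), use functoriality (Proposition~\ref{functo}) to remove all but one $\sigma_1^{-1}$, and then invoke Proposition~\ref{stabil} on the resulting negative Markov stabilization. Your defense---that ``standard proofs of Proposition~\ref{vanish} do not appeal to the present proposition, since they must also handle non-right-veering braids that admit no Markov destabilization''---misreads that argument: those braids are handled precisely by first using functoriality to \emph{reduce} to a destabilizable braid, and then applying Proposition~\ref{stabil}. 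So within this paper you cannot bootstrap from Proposition~\ref{vanish}.

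The paper itself does not supply a proof of Proposition~\ref{stabil}; it simply cites \cite{OST, Pla-Kh} for the original arguments. Your final paragraph correctly identifies what those original arguments are: for $\hthe$, one realizes the negative stabilization on a grid and exhibits the distinguished cycle as an explicit boundary; for $\psi$, one uses the mapping-cone decomposition at the added negative crossing. Those sketches are accurate and are the content one would actually cite. Your observation that $\beta\sigma_m^{-1}$ is non-right-veering (via the arc to $x_{m+1}$ disjoint from the support of $\beta$) is correct and pleasant, but in this paper it is a consequence of Proposition~\ref{stabil} via Proposition~\ref{vanish}, not a route to it.
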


Both the Heegaard Floer and the Khovanov-homological invariants are functorial
with
respect to a positive crossing resolution in the transverse braid. More
precisely:

\begin{prop} \cite{Baldwin, Pla-Kh} \label{functo} Suppose that the
braid  $\beta$ is obtained from $\beta_{+}$ by removing a positive crossing of
the braid (i.e. removing a generator $\sigma_i$
from the braid word), and let $K$, $K_{+}$ be the corresponding transverse
links. The crossing resolution cobordism gives rise to  homomorphisms on
Khovanov and Heegaard Floer homology, so that 

(1) There exists a map $F: \HFK(m(K_{+})) \to \HFK(m(K))$ such that
$F(\hthe(K_{+}))= \hthe(K)$, 

(2) There exists a map $G:Kh(K_{+}) \to Kh(K)$ such that $G(\psi(K_+))=
\psi(K)$.

Similarly, if $K_{-}$ corresponds to a transverse braid that differs from
$\beta$ by an additional negative crossing, the  maps on Heegaard Floer resp.
Khovanov homology send $\hthe(K)$ to $\hthe(K_{-})$ and $\psi(K)$ to
$\psi(K_{-})$.
\end{prop}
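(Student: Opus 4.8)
The plan is to produce $F$ and $G$ as the chain maps canonically associated to the crossing resolution, and then to check, at the chain level, that each carries the distinguished transverse cycle of one link to that of the other; it suffices to exhibit one such map in each case. The unifying observation is local: fix a braid word for $\beta_+$ with a distinguished letter $\sigma_i$, so that $\beta$ is obtained by erasing it. As $\sigma_i$ is a positive crossing, its oriented ($0$-) resolution is precisely the braid $\beta$, so erasing $\sigma_i$ \emph{is} passing to that resolution; consequently the resolution cube of $K$ sits inside the resolution cube of $K_+$ (as the $0$-subcube at $\sigma_i$), and the distinguished cycles of $K_+$ and $K$ coincide away from $\sigma_i$, differing only in a prescribed way near it. It therefore suffices to see that the resolution map is ``the identity away from the crossing and the evident local map near it.''

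I would treat the Khovanov case first; this is \cite{Pla-Kh}. For the positive crossing $\sigma_i$ the $0$-subcube is a quotient complex of $CKh(K_+)$, so the quotient map $\pi\colon CKh(K_+)\to CKh(K)$ is a chain map (well defined up to the global shift coming from the extra positive crossing), and it is the map naturally associated to resolving $\sigma_i$ --- up to that shift it agrees with the Khovanov cobordism map of the resolving saddle. Set $G=\pi_*$. The oriented resolution of a closed $m$-braid consists of $m$ nested circles, and by definition $\psi(K_+)$ is represented by the unique lowest-quantum-grading generator of the summand of $CKh(K_+)$ at that vertex; since $\pi$ restricts to the tautological identification of that summand with the corresponding summand of $CKh(K)$, it carries this generator to the lowest generator there, which represents $\psi(K)$. (A global shift changes absolute gradings but not which generator is lowest, so no anomaly arises.) Hence $G(\psi(K_+))=\psi(K)$. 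For a negative crossing $\sigma_i^{-1}$ the oriented resolution is instead the $1$-resolution, which is a \emph{subcomplex}; the resulting inclusion $CKh(K)\hookrightarrow CKh(K_-)$ is a chain map realizing the relevant cobordism map and visibly sends the cycle for $\psi(K)$ to the cycle for $\psi(K_-)$.

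The Heegaard Floer case runs in parallel but requires the diagrammatic model of the resolution map on $\HFK$ from \cite{Baldwin}. One chooses grid presentations of $m(K_+)$ and $m(K)$ --- in the braid-to-grid form of \cite{KN} --- that agree outside a standard local piece encoding $\sigma_i$, so that erasing $\sigma_i$ corresponds to a prescribed local modification of the grid; this yields an explicit chain map $\Phi$ on grid complexes realizing the band-cobordism map $F$. Evaluated on the cycle $\hthe(K_+)$ of upper-right $X$-corners, $\Phi$ does nothing to the (unchanged) markings away from the piece and, near $\sigma_i$, sends the local upper-right $X$-corners to the upper-right $X$-corners of the resolved grid --- all other contributions to $\Phi$ on this generator being excluded for grading or positivity reasons --- so $\Phi(\hthe(K_+))=\hthe(K)$, and $F=\Phi_*$ works; the negative-crossing statement again follows from the analogous subcomplex inclusion. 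The main obstacle is exactly this last chain-level verification on the grid (or Heegaard) side: identifying $F$ with an explicit local combinatorial map and ruling out error terms when it is evaluated on the distinguished cycle --- on the Khovanov side this is immediate from the mapping-cone structure, while on the Floer side it is the computation that must be extracted from \cite{Baldwin}. One must also track the mirroring and the grading shifts, but, as in the Khovanov case, only global shifts intervene, and they leave the distinguished cycles undisturbed.
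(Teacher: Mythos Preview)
The paper does not supply its own proof of this proposition: it is stated as a background fact with citations to \cite{Baldwin} and \cite{Pla-Kh}, and the paper immediately moves on. So there is no ``paper's proof'' to compare against, only the original sources.

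That said, your sketch is a fair outline of what happens in those sources, with one caveat on the Heegaard Floer side. The Khovanov half is exactly right: the oriented resolution of a positive crossing is the $0$-resolution, so $CKh(K)$ sits as a quotient complex of $CKh(K_+)$ (and as a subcomplex for a negative crossing), and the distinguished generator at the all-oriented vertex is visibly preserved by the quotient or inclusion. This is precisely the argument in \cite{Pla-Kh}.

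On the Floer side, your description of $\Phi$ as a purely local grid modification with ``error terms excluded for grading or positivity reasons'' oversells the directness of the argument. Baldwin's map in \cite{Baldwin} is not built as a local combinatorial rule on grids; it comes from a comultiplication (equivalently, a skein-triangle or connected-sum) construction, and the verification that it carries $\hthe(K_+)$ to $\hthe(K)$ is a genuine computation with holomorphic triangles or their combinatorial avatars, not a one-line locality check. Your parenthetical acknowledgment that this ``is the computation that must be extracted from \cite{Baldwin}'' is accurate, but the preceding paragraph suggests a cleaner mechanism than actually exists. If you were writing this up, the honest move is to cite \cite{Baldwin} for the existence and naturality of $F$ rather than sketch a grid-local map that does not quite match the construction there.
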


Propositions \ref{sigma-negative}, \ref{stabil}, and \ref{functo} combine to
immediately give a proof of Proposition~\ref{vanish}: non-right-veering
transverse knots have $\hthe=0$ and $\psi=0$. (Note that the proof of
vanishing of $\hthe$ in \cite{BVV} relies on a different definition of
right-veering property, namely, one considers the effect of the braid monodromy
on arcs with both endpoints on $\d \D$. We can, however, reduce our
Proposition \ref{vanish} to \cite{BVV} by ``doubling'' a non-right-veering arc
in $(\D, Q)$.)

\section{Non-right-veering transverse links} \label{section-nonrv}

In this section, we discuss a few basic features of non-right-veering
transverse links. (These  can be turned around to say that right-veering is a
{\em necessary} condition for certain properties.) Recall that those were
defined as links that can be represented by non-right-veering braids. We first
show that non-right-veering braids can become right-veering after positive
Markov stabilizations. This parallels the well-known situation with open books
\cite{HKM}. 

\begin{prop} \label{prop-stab-rv} Let $K$ be an arbitrary  transverse link. Then
$K$ can be always
represented by a right-veering braid.
\end{prop}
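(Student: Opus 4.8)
The plan is to take an arbitrary braid representative $\beta \in B_m$ of $K$ and modify it by positive Markov stabilizations until the resulting braid is right-veering. A positive stabilization replaces $\beta$ by $\beta\sigma_m \in B_{m+1}$, and conjugations are also allowed; both operations preserve the transverse link type, so it suffices to produce \emph{some} braid reachable from $\beta$ by these moves that is right-veering. The natural candidate is to stabilize enough times and then conjugate so that the braid word becomes a product of positive powers of the generators together with one large positive half-twist on the new strands — in other words, to push $\beta$ past the threshold where Dehornoy-positivity of \emph{every} arc image is forced.

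First I would recall the key structural fact about right-veering in terms of the braid ordering: a braid $\gamma \in B_n$ is right-veering precisely when $\phi(a) \succeq a$ for every arc $a$ in $(\D,Q)$, and (by Proposition~\ref{sigma-negative} and its reformulation) non-right-veering is detected by the existence of a conjugate admitting a word with $\sigma_1^{-1}$ and no $\sigma_1$. The cleanest way to force right-veering is to arrange that the braid word, after stabilization and conjugation, contains a full positive twist $\Delta^2$ on all $n$ strands as a factor (here $\Delta$ is the half-twist). A full twist is central, is a product of positive Dehn twists, and acts on every arc by veering it strictly to the right — or, more precisely, $\Delta^2 \cdot \gamma$ is right-veering for \emph{any} $\gamma$ once enough positive generators have been introduced to absorb the negative letters of $\gamma$. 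So the concrete target is: show that after $N$ positive stabilizations, for $N$ large, the stabilized braid is conjugate to one of the form $\Delta_n^2 \cdot w$ where $w$ is $\sigma$-positive (or more strongly a positive word), hence right-veering.

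The mechanism by which stabilization helps is the following: writing $\beta$ as a word in $\sigma_1^{\pm1},\dots,\sigma_{m-1}^{\pm1}$ with some number $k$ of negative letters, I would use the braid relations plus positive stabilizations to "trade" each negative letter for positive ones at the cost of adding strands. A standard move is that a positive stabilization $\beta \mapsto \beta \sigma_m$ together with conjugation lets one replace a subword $\sigma_i^{-1}$ by a positive word on more strands (this is the destabilization/Markov bookkeeping used, e.g., in showing every link is a closure of a positive-permutation braid, or in the proof that quasipositivity is achievable after stabilization is \emph{not} true — so I must be careful: I only need right-veering, which is much weaker than positivity). Right-veering only needs the \emph{image of every arc} to veer right, and I can guarantee this by ensuring the monodromy is a product of a full twist with \emph{any} braid: concretely, if $\gamma = \Delta_n^2 \delta$ then for every arc $a$, $\gamma(a) = \Delta_n^2(\delta(a))$ veers strictly right of $\delta(a)$, which in turn, if $\delta$ has fractional Dehn twist coefficient $> -1$ or simply is "not too negative", lies far enough that $\gamma(a) \succeq a$. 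So the real task reduces to: after enough positive stabilizations, $\beta$ becomes conjugate to $\Delta_n^2 \delta$ where the negativity of $\delta$ is bounded independently of $n$, so one more full twist (one more round of stabilizations producing $\Delta^2$) wins.

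The main obstacle I expect is making the last step rigorous: controlling how "negative" the leftover factor $\delta$ can be and verifying that a single extra full positive twist genuinely dominates it on \emph{every} arc, not just on the $x$-axis arc detected by Dehornoy's order. Right-veering is a statement about \emph{all} arcs simultaneously, so I cannot simply appeal to $\beta \succ 1$ in Dehornoy's order (the excerpt explicitly warns this is insufficient). I would handle this either (a) by a direct geometric argument that $\Delta_n^{2N} \delta$ is right-veering for $N \gg 0$ because $\Delta_n^2$ wraps a collar of $\d\D$ and drags every arc's initial segment strictly rightward past any fixed amount of leftward veering coming from $\delta$ — this is essentially the statement that a high power of a boundary-parallel positive Dehn twist makes any mapping class right-veering — or (b) by reducing to the known open-book analogue in \cite{HKM}, where exactly this statement (stabilizing an open book makes the monodromy right-veering) is proven, and transporting it to the braid setting via the correspondence between braids in the disk and (a piece of) an open book. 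Option (b) is the quickest honest route; option (a) keeps everything self-contained in the braid world, which matches the paper's philosophy, but requires a short lemma on powers of the boundary twist dominating a fixed mapping class.
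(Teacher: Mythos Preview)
Your proposal correctly isolates the key lemma --- that $\Delta^{2N}\gamma$ is right-veering for $N$ sufficiently large, i.e.\ a high enough power of the boundary twist dominates any fixed mapping class --- and the paper uses exactly this fact. The gap is in the mechanism you propose for producing such a factor via positive Markov stabilization. After $N$ standard positive stabilizations the braid is $\beta\,\sigma_m\sigma_{m+1}\cdots\sigma_{m+N-1}\in B_{m+N}$, which has gained only $N$ positive letters; meanwhile $\Delta_{m+N}^2$ has exponent sum $(m+N)(m+N-1)$, growing quadratically in $N$. So if you write the stabilized braid as $\Delta_{m+N}^2\,\delta$, the leftover $\delta$ becomes \emph{more} negative as $N$ grows, not less, and your ``$\delta$ has bounded negativity independent of $n$'' claim fails. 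Many stabilizations simply do not manufacture a full twist on the enlarged strand set.

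The paper's trick is to encode the boundary twisting in the \emph{shape of the stabilizing arc} rather than in the number of stabilizations. Only two positive stabilizations are used, but each is a stabilization along an arc (standard stabilization composed with a conjugation, hence still a transverse equivalence) chosen so that the arc winds $N$ times around $\partial\D$ in the annular region between the old disk and the enlarged one, one arc clockwise and the other counterclockwise. The positive half-twist supported near such an arc, when applied to any arc $a$ that enters the inner disk, either sends $a$ rightward around one of the two new marked points, or drags $a$ along the stabilizing arc through $N$ full boundary twists before it enters $\D$ --- and by the choice of $N$ this is enough to overcome whatever left-veering $\beta$ contributes. This is the braid incarnation of the \cite{HKM} open-book argument you mention in option~(b), so that route is morally correct; but a bare citation would not suffice, since the translation from arcs with both endpoints on $\partial S$ to arcs from $\partial\D$ to a puncture needs to be made, and the paper supplies exactly this self-contained argument.
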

 
 \begin{proof} We will use the notion of stabilization along an arc. Recall 
 that the usual Markov stabilization can be thought of as follows. For a braid
$\beta \in B_m = Map (\D, Q)$, we enlarge the disk $\D$ and add an extra point, 
$x_{m+1}$, to the set $Q = \{x_1, x_2, \dots, x_m\}$. The monodromy of the
stabilized braid $\beta \sigma_m$ is then given by the composition of the
monodromy $\beta$ with a positive half-twist interchanging $x_{m}$ and $x_{m+1}$
and supported in a thin neighborhood of a standard arc connecting $x_{m}$ and
$x_{m+1}$. See Figure \ref{stabilize} (left). More generally,  we can instead
consider an {\em arbitrary } arc connecting $x_{m+1}$ to a point  $x_j \in Q$
(and
avoiding $x_i \in Q$ for $i \neq j$), Figure \ref{stabilize} (right). Composing
$\beta$ with a positive half-twist interchanging $x_j$ and $x_{m+1}$ and
supported
in a thin neighborhood of a this arc, we obtain a new stabilized braid. 
Clearly, stabilization along a general arc amounts to the effect of a standard
stabilization composed with conjugation, so the result represents the same
transverse link.  

  \begin{figure}[ht]
  \includegraphics[scale=0.8, keepaspectratio]{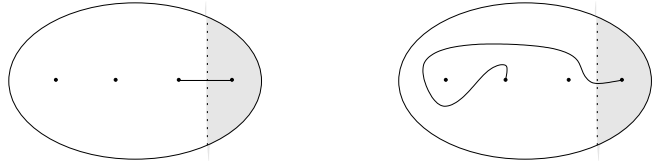}
  \caption{Markov stabilization $\beta \mapsto \beta \sigma_{m}$ (left);
\qquad stabilization along an arbitary  arc (right).}
  \label{stabilize}
\end{figure}
  
Now, we start with an arbitary (non-right-veering) braid $\beta \in B_m$, and
stabilize it twice as described below, so that it becomes right-veering. First,
find a positive integer $N$ so that $\beta$ composed with an $N$-fold positive
boundary twist is right-veering. Enlarge the disk $\D$ so that it is contained
inside a bigger disk $\tilde{\D}$, and $\tilde{\D} \setminus\D$ contains two
additional marked points, $x_{m+1}$ and $x_{m+2}$. We can assume that the
monodromy $\beta$ is supported inside $\D$, and $\beta$ is stabilized along two
arcs that connect $x_{m+1}$ resp. $x_{m+2}$ to some marked points in $\D$. We
choose these two arcs as shown in Figure \ref{stab-arcs}: both arcs make at
least  $N$ full twists in $\tilde{\D} \setminus \D$ before entering $\d \D$.
Moreover, one arc twists in the clockwise direction, and the other in
counterclockwise, and they intersect so that each of them makes at least $N$
twists around $\d \D$ between their first intersection point and the endpoint
 inside $\D$. 
 
   \begin{figure}[ht]
  \includegraphics[scale=0.8, keepaspectratio]{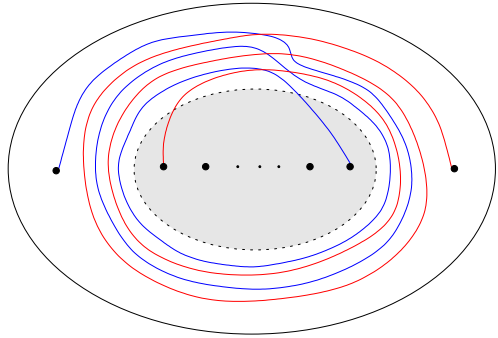}
  \caption{The monodromy of the braid $\beta$ is supported inside the shaded
disk. The two stabilizing arcs twist around.}
  \label{stab-arcs}
\end{figure}
 
 We claim that after positively stabilizing along these two arcs
(in any order), $\beta$ becomes right-veering. Indeed, let $a$ any arc in
$\tilde{\D}$ that starts on $\d \tilde{\D}$ and goes to one of the marked
points. If $a$ does not enter $\D$, then clearly the image of $a$ must lie to
the right of $a$. If $a$ enters $\D$, then it must cross the stabilizing arcs. 
The monodromy of the stabilized braid is the composition of the
monodromy of $\beta$ with two positive half-twists supported near the
stabilizing arcs. Two cases are then possible for the image of $a$ under this
new monodromy: either 1) the image of $a$ veers right toward and around the
point $x_{m+1}$ or $x_{m+2}$, or 2) it follows one of the stabilizing arcs,
making (at least) $N$ right-handed full twists before entering $\D$. In the
first case, the  image
of $a$  must veer  to the right and go around  $x_{m+1}$ or $x_{m+2}$ even
after it is pulled taut, which guarantees that it stays to the right of $a$.
In the second case, the image of $a$ will be to the right of $a$ because
$\beta$ becomes right-veering when composed with $N$ positive full twists. 
 \end{proof}

 \begin{prop} \label{sl} Suppose $K$ is a transverse knot such that $sl(K)= s-1$
or
$sl(K)=2 \tau -1$. Then $K$ is right-veering.
\end{prop}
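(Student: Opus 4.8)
The statement is equivalent to the contrapositive form implicit in Theorem~\ref{sl-bounds}(1): assuming $K$ is non-right-veering one wants $sl(K)<s-1$ and $sl(K)<2\tau-1$, which, together with the bounds $sl(K)\le s-1$ and $sl(K)\le 2\tau-1$ recalled in the introduction, is the same as saying that equality in either bound forces $K$ to be right-veering. By Proposition~\ref{vanish} a non-right-veering transverse link has $\hthe=0$ and $\psi=0$, so it suffices to prove: (a) if $sl(K)=s-1$ then $\psi(K)\neq 0$; and (b) if $sl(K)=2\tau(K)-1$ then $\hthe(K)\neq 0$. I would establish both by the same ``sharpness of a filtration level'' argument.

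For (a): by \cite{Pla-Kh}, $\psi(K)$ is represented by the generator $v_-\otimes\dots\otimes v_-$ over the circles of the oriented (Seifert) resolution of a braid for $K$; it lies in homological grading $0$ and quantum grading $sl(K)$, and this cycle is exactly the lowest-filtration term of Lee's canonical cycle $\mathfrak{s}_o$ in the $q$-filtered Lee complex. By Rasmussen's theorem the homology class $[\mathfrak{s}_o]$ has filtration level $s-1$. If $sl(K)=s-1$, then $\mathfrak{s}_o$ itself already realizes the maximal filtration level of its class, hence is not filtered-homologous to a cycle of strictly higher filtration; consequently its leading term $\psi(K)$ survives to the $E_\infty$-page of the Lee spectral sequence, and is in particular nonzero in $E_1=Kh(K)$. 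Thus $\psi(K)\neq 0$, and Proposition~\ref{vanish} yields that $K$ is right-veering.

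For (b): I would run the parallel argument in Heegaard Floer homology. By the construction of \cite{OST}, $\hthe(K)\in\HFK(m(K))$ lies in Alexander grading $(sl(K)+1)/2$, and may be viewed as the distinguished lowest-Alexander-filtration class associated to $K$ in the filtered complex underlying the spectral sequence $\HFK(m(K))\Rightarrow\widehat{HF}(S^3)$; the bound $sl(K)\le 2\tau(K)-1$ records that the filtration level at which this class is detected is constrained by $\tau$. Exactly as in (a), the equality $sl(K)=2\tau(K)-1$ makes this filtration level sharp, which forces $\hthe(K)$ to be nonzero, and then Proposition~\ref{vanish} again gives right-veering.

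The main obstacle is the bookkeeping that upgrades the ``equality $\Rightarrow$ leading term survives'' heuristic to a proof: one must verify that $\psi(K)$, resp.\ $\hthe(K)$, is genuinely the lowest-filtration piece of the canonical cycle involved (the content of \cite{Pla-Kh}, resp.\ \cite{OST}), and that the hypothesis really pins the filtration level of the ambient homology class to precisely the quantum, resp.\ Alexander, grading of that piece. On the Heegaard Floer side this additionally requires tracking mirror conventions, the choice between the ``hat'' and minus versions of the filtered complex, and the $\ZZ/2$ coefficients used throughout; I would check that none of these interferes with the sharpness argument. A cleaner route for (b), if available, would be to cite directly a statement from \cite{OST} to the effect that equality in the $\tau$ slice--Bennequin bound forces $\hthe(K)\neq 0$.
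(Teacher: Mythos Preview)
Your argument is correct, and for the $\tau$ case it matches the paper's approach: both reduce to ``$sl(K)=2\tau(K)-1 \Rightarrow \hthe(K)\neq 0$'' and then invoke Proposition~\ref{vanish}. The paper makes this step concrete by passing to $\theta^-\in HFK^-(m(K))$: one knows $\theta^-$ is always nontrivial and that $\hthe=0$ iff $\theta^-$ lies in the image of $U$; when $sl=2\tau-1$ the Alexander grading of $\theta^-$ coincides with the top of the $\ZZ[U]$-tower, so $\theta^-\notin\on{im}U$. This is exactly your ``sharpness of filtration level'' heuristic, spelled out, and the paper attributes it to \cite[Proposition~6]{NOT} rather than \cite{OST}.

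For the $s$ case your route genuinely differs from the paper's. You prove ``$sl=s-1 \Rightarrow \psi\neq 0$'' via the Lee spectral sequence (the leading term of $\mathfrak{s}_o$ realizes the filtration level of its class, hence survives), and then apply Proposition~\ref{vanish}. The paper never shows $\psi\neq 0$; instead it argues directly with $s$: a non-right-veering link has a $\sigma_1$-negative braid word (Corollary~\ref{non-rv-form}), and deleting all but one of the $\sigma_1^{-1}$'s gives a cobordism to a transversely stabilized link $L'$, for which $sl(L')<s(L')-1$ is strict; the cobordism inequality \cite{Ra,BW} for $s$ then transports the strict inequality back. Your approach is more uniform with the Heegaard Floer argument and recovers the extra information $\psi\neq 0$; the paper's approach avoids the Lee machinery entirely, sidesteps coefficient issues (Lee theory is over $\mathbb{Q}$, while the paper works over $\ZZ/2$), and along the way extends the bound $sl\le s-1$ and the strict inequality to links.
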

 
 \begin{proof}  Suppose that $sl(K)=2 \tau(K) -1$. Then right-veering of $K$
follows from Proposition \ref{vanish} and the fact that the equality  $sl(K)=2
\tau(K) -1$ implies that $\hthe(K) \neq 0$. This last fact seems to be absent
from
the literature but is known to experts; indeed, the proof given in
\cite[Proposition 6]{NOT} for the case of thin knots does not actually use the
thin property.  
The idea is that
$\hthe(K)$ 
is related to another invariant, $\theta^-\in HFK^{-}(m(K))$. The latter is 
always non-trivial, and $\hthe$ vanishes if and only if $\theta^-$ is in the
image of the $U$-map. (Recall that one takes the quotient by the image of $U$
to pass from $HFK^{-}$ to $\h{HFK}$.) If $sl(K)= 2 \tau -1$, the grading
level of $\theta^-$ is the same as the very top of the
$\ZZ[U]$-chain in $HFK^-$, which means $\theta^-$ can't be in the image of $U$.

The second statement of the Proposition concerns Rasmussen's invariant $s(K)$,
defined via Khovanov homology. In \cite{Ra} $s$ is only defined for
knots, not for links, but an extension to the case of oriented links was given
in \cite{BW}. Recall that $sl(K) \leq s(K)-1$ when $K$ is a knot; we claim that
the same inequality holds for links. To see this, we will connect the given
link to a knot by a cobordism. Under cobordisms, $s$ changes as follows
\cite{Ra, BW}: if $S$ is a smooth oriented cobordism from $L_1$ to $L_2$ such
that every connected component of $S$ has  boundary in $L_1$, then 
\begin{equation} \label{chi-ineq}
|s(L_2) - s(L_1)| \leq - \chi (S). 
\end{equation}
Given a $(k+1)$-component transverse braid $L$, we can insert $k$ positive
crossings to obtain a connected braid $K$. This gives a cobordism $S$ from the
link $L$ to the knot $K$; we have $\chi(S) = k$. Then $sl(L) = sl(K)-k \leq
s(K)-1-k \leq s(L) -1$ by (\ref{chi-ineq}).  

We are now ready to prove the proposition. Assume that $L$ is a
non-right-veering transverse link. Then by Corollary \ref{non-rv-form}, $L$ has 
a braid representative with only negative crossings in the $\sigma_1$-level 
(i.e. the braid word has entries $\sigma_1^{-1}$ but no $\sigma_1$). Removing
all of these crossings except one, we get a cobordism $S$ from the link $L$ to a
link $L'$ which is a negatively stabilized transverse braid. If $k$ negative
crossings were deleted, then $sl(L) = sl(L') - k$. On the other hand, 
$s(L) \geq s(L') - k$ by  (\ref{chi-ineq}). Since $L'$ is a stabilized
transverse link, the inequality $sl(L') < s(L') -1$ is strict; therefore, 
$sl(L) < s(L) -1$.
\end{proof}

 \begin{prop} \label{branch} Suppose $L$ is a   transverse
link such that the branched double cover  $(\Sigma(L), \xi_L)$  is tight. 
Then $L$ is right-veering.
\end{prop}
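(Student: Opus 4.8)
The plan is to prove the contrapositive: if $L$ is non-right-veering, then $(\Sigma(L),\xi_L)$ is overtwisted. I would begin from the standard open-book description of this branched cover: if $L$ is the closure of a braid $\beta\in B_m$, viewed as $\beta\in\mathrm{Map}(\D,Q)$, then $(\Sigma(L),\xi_L)$ is supported by the open book whose page $\h S$ is the double cover of the disk $\D$ branched over the $m$ points of $Q$, and whose monodromy is the lift $\h\beta\in\mathrm{Map}(\h S,\d\h S)$ of $\beta$; this comes from restricting to the branched cover the disk-page open book of $(S^3,\xi_{std})$ whose binding is the braid axis. Granting this, the Honda--Kazez--Mati\'{c} theorem \cite{HKM} --- a contact structure admitting a non-right-veering open book is overtwisted --- reduces the problem to the following: it suffices to produce one braid representative $\beta$ of $L$ whose lift $\h\beta$ is non-right-veering as an automorphism of $\h S$.

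To produce such a $\beta$, I would use Corollary \ref{non-rv-form} to represent $L$ by a braid whose word contains $\sigma_1^{-1}$ but no $\sigma_1$; by the geometric interpretation of Dehornoy's order recalled in Section \ref{defns}, such a $\beta$ sends the initial arc of the $x$-axis (the segment from $\d\D$ to the leftmost point $x_1$) strictly to the left of itself. I would then apply the ``doubling'' device already used in the proof of Proposition \ref{vanish}: replace that arc by the boundary $b$ of a regular neighborhood of it, so that $b$ is a properly embedded arc with both endpoints on $\d\D$ which encircles $x_1$, is disjoint from $Q$, and is still sent strictly to the left by $\beta$. Since $b$ avoids all branch points, its preimage in $\h S$ is a pair $\h b_1\sqcup\h b_2$ of disjoint properly embedded arcs, and $\h\beta(\h b_1)$ is one of the two lifts of $\beta(b)$. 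At a common boundary endpoint $\h p$ of $\h b_1$ and $\h\beta(\h b_1)$ the covering map is an orientation-preserving local diffeomorphism, so it identifies the germs of the pair $(\h b_1,\h\beta(\h b_1))$ at $\h p$ with those of $(b,\beta(b))$ downstairs; hence $\h\beta(\h b_1)$ lies strictly to the left of $\h b_1$ at $\h p$, provided $\h b_1$ and $\h\beta(\h b_1)$ are already taut in $\h S$.

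The step I expect to need the most care --- essentially the only subtlety --- is this tautness check, and it is here that the doubling pays off. Let $D\subset\h S$ be a bigon, or a boundary half-bigon, bounded by an arc of $\h b_1$ and an arc of $\h\beta(\h b_1)$. If $D$ contains no branch point, then $\pi|_D$ is an embedding and $\pi(D)$ is a corresponding bigon (or half-bigon) between $b$ and $\beta(b)$ in $\D\setminus Q$, contradicting the tautness of $b$ and $\beta(b)$ there. If $D$ contains a branch point, a short analysis of the branched covering shows that $\d D$ must be invariant under the deck involution of $\h S\to\D$; but that involution is fixed-point-free off the branch locus and sends $\h b_1\cup\h\beta(\h b_1)$ onto $\h b_2\cup\h\beta(\h b_2)$, a one-complex meeting it in only finitely many points, so it cannot preserve the circle $\d D$. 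Hence no such $D$ exists: $\h b_1$ and $\h\beta(\h b_1)$ are taut in $\h S$, $\h\beta(\h b_1)$ lies strictly to the left of $\h b_1$, and $\h\beta$ is non-right-veering. By \cite{HKM} the contact structure $\xi_L$ is then overtwisted, in particular not tight. I would close, in parallel with the other parts of Theorem \ref{sl-bounds}, by noting that the converse fails: Proposition \ref{counterex} exhibits right-veering transverse knots with overtwisted branched double covers.
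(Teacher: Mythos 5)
Your overall route is the same as the paper's: prove the contrapositive, describe $(\Sigma(L),\xi_L)$ by the open book whose page is the double cover of $\D$ branched over $Q$ and whose monodromy is the lift of the braid, exhibit a non-right-veering arc for that lift, and invoke Honda--Kazez--Mati\'c. The difference is your choice of arc: you double the left-veering arc downstairs first, so that $b$ misses $Q$ and lifts to two disjoint arcs $\h b_1,\h b_2$ that avoid the branch points and are \emph{not} invariant under the deck involution $\tau$. The paper instead lifts the arc $a$ ending \emph{at} the marked point; its full preimage is a single boundary-to-boundary arc passing through the branch point and invariant under $\tau$, and that invariance is exactly what powers the analysis of a bigon containing a branch point.

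This is where your argument has a genuine gap. The sentence ``if $D$ contains a branch point, a short analysis of the branched covering shows that $\d D$ must be invariant under the deck involution'' is not a short analysis but the whole problem. As you yourself observe in the next clause, $\d D$ can \emph{never} be $\tau$-invariant, since $\tau$ carries $\h b_1\cup\h\beta(\h b_1)$ onto $\h b_2\cup\h\beta(\h b_2)$, which meets it in finitely many points; so your conditional claim is logically equivalent to the assertion ``no bigon between $\h b_1$ and $\h\beta(\h b_1)$ contains a branch point,'' which is precisely the statement that needs proof. I do not see how to get it cheaply in your setup: a bigon $D$ through a branch point and its image $\tau(D)$ merely overlap near the fixed point, and nothing forces their boundaries to coincide, because your arcs carry no symmetry. (Your no-branch-point case also asserts that $\pi|_D$ is an embedding, which needs either $D\cap\tau(D)=\emptyset$ or an immersed-bigon-implies-embedded-bigon lemma, but that is a minor, fixable point.) The needed fact --- taut arcs in $\D\setminus Q$ have taut lifts in the branched double cover, bigons through branch points included --- is true, but requires a real argument, e.g.\ an equivariant hyperbolic/orbifold-geodesic argument (give the marked points cone angle $\pi$, so geodesics downstairs lift to geodesics upstairs), or, more simply, the paper's device: lift the original arc ending at the puncture, so that both $\alpha$ and its comparison arc are $\tau$-invariant, and then a bigon containing the (unique, by Euler characteristic) branch point is forced to split along the arcs into two bigons projecting to bigons in $(\D,Q)$, contradicting tautness downstairs. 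Replacing your doubling step by that choice of arcs would close the gap.
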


\begin{proof} Suppose $L$ is non-right-veering, i.e. there is a
non-right-veering braid representative $\sigma$ for $L$. Then, there is 
an arc $a$ in $(D, Q)$ whose image $\sigma (\alpha)$ can be isotoped to an arc
$b$
minimally intersecting $a$, so that $b$ is to the left of $a$. We want to show
that $(\Sigma(L), \xi_L)$ is overtwisted.

The branched double
cover has 
an open book decomposition $(S, \phi)$ such that $S$ is a double cover of $D$
with branch set $Q$,
and the monodromy $\phi$ covers $\sigma$. The arc $a$ lifts to an arc  $\alpha$
in $S$, 
and it is clear that $\phi(\alpha)$ is isotopic to the arc $\beta$ that covers
$b$ and lies to the left of $\alpha$. We can conclude that $\phi$ is
non-right-veering if we check that $\alpha$ and $\beta$ form no
bigons. Indeed, if there were a bigon $B$ formed by $\alpha$ and $\beta$, 
then $B$ can't cover a bigon in $(D, Q)$, and so $B$ must contain a branch
point (unique for the Euler characheristic reasons). But then in a
neighborhood of $B$ the covering  can be modelled on $z \to z^2$, and
this neighborhood admits a covering involution (modelled on $z \to -z$).
The arcs $\alpha$ and $\beta$ are both invariant under this involution,  
so they have to pass through the branch point, and $B$ is in fact the union of
two bigons, each of which covers a bigon downstairs. This is a contradiction
since we assumed that bigons between $a$ and $b$  in $(\D, Q)$ no
longer exist.
\end{proof}

It is natural to ask whether the converses of Propositions \ref{vanish},
\ref{sl}, and \ref{branch} hold true. We have counterexamples to all statements
except the one concerning~$\hthe$.

\begin{prop} \label{counterex} There exist right-veering transverse knots
whose self-linking number is not maximal in the corresponding smooth knot type,
branched double cover is overtwisted, and $\psi=0$.
\end{prop}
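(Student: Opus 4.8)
The plan is to exhibit explicit transverse knots and check each listed property with the tool appropriate to it. The one nontrivial ingredient for certifying the right-veering property is the contrapositive of Proposition \ref{vanish}: if $\hthe(K)\neq 0$, then $K$ is right-veering. Since we want examples with $\psi=0$, we cannot certify right-veering by non-vanishing of $\psi$, so throughout it is non-vanishing of $\hthe$ that does the work. The problem thus reduces to producing transverse knots with $\hthe\neq 0$ for which, nonetheless, the ``quasipositive-type'' conclusions fail.

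First, the self-linking number. Non-vanishing of $\hthe$ only forces right-veering; it does not force any of the sharp equalities $sl=2\tau-1$, $sl=s-1$, $sl=2g^*-1$, which encode strictly more (compare Proposition \ref{sl}). So one can take a connected sum $K=K_0\connsum J$ with $K_0$ a positive torus knot (say the right-handed trefoil) — so $K_0$ is right-veering with maximal $sl$, $\hthe(K_0)\neq 0$, $\psi(K_0)\neq 0$, and $(\Sigma(K_0),\xi_{K_0})$ Stein fillable — and $J$ a transverse knot whose self-linking number sits strictly below all three upper bounds but with $\hthe(J)\neq 0$; a figure-eight (or twist-knot) representative, with $\overline{sl}=-3$ while $2\tau-1=s-1=-1$ and $2g^*-1=1$, is the natural candidate. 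Since transverse connected sum adds $sl$ (with the usual $+1$), since $g^*,\tau,s$ are additive, and since $\hthe$ is multiplicative under connected sum (via the K\"unneth identification of $\HFK$), $K$ then has $\hthe(K)\neq 0$, hence is right-veering, but $sl(K)<2\tau(K)-1$, $sl(K)<s(K)-1$, and $sl(K)<2g^*(K)-1$.

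For the branched double cover we use $(\Sigma(K_0\connsum J),\xi_{K_0\connsum J})=(\Sigma(K_0),\xi_{K_0})\connsum(\Sigma(J),\xi_J)$ together with the fact that a contact connected sum is overtwisted as soon as one summand is; so it suffices that $J$ have overtwisted branched double cover. For $\psi$ we use that the Khovanov transverse invariant is well-behaved under connected sum, so that $\psi(K)=0$ once one summand has vanishing Khovanov transverse invariant. Hence everything comes down to choosing the auxiliary knot $J$: a transverse knot with $\hthe(J)\neq 0$ but $\psi(J)=0$, with $\overline{sl}(J)$ below the slice--Bennequin bounds, and with $(\Sigma(J),\xi_J)$ overtwisted. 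One convenient source is low-index braids — for instance $3$-braids, where Theorem \ref{3b} certifies right-veering (equivalently $\hthe\neq 0$) directly, where overtwistedness of the branched double cover can be read off from the classification of tight double covers of $3$-braid closures, and where the Khovanov transverse invariant can be computed; right-veering $3$-braids that are not quasipositive (pseudo-Anosov $3$-braids with small fractional Dehn twist coefficient, or products such as $\Delta^{-2}$-multiples of longer quasipositive braids) are the shape to look for, and for a single such braid one may not need the connected sum at all. The main obstacle is exactly the construction and verification of this $J$: one must locate a transverse knot on which the Heegaard Floer transverse invariant still detects right-veering while the Khovanov transverse invariant does not — precisely the discrepancy that makes the analogues of Theorems \ref{3b} and \ref{C>1} fail for $\psi$ — and then check that it can simultaneously be arranged to have non-maximal self-linking number and overtwisted branched double cover. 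The $\hthe\neq 0$ half is handled by the grid-diagram technique of the rest of the paper (or by Theorems \ref{3b} and \ref{C>1}); pinning down an explicit example that meets all the constraints at once, and computing its $\psi$, is where the work lies.
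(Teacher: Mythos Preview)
Your instinct to certify right-veering via $\hthe\neq 0$ (the contrapositive of Proposition~\ref{vanish}) is correct, and is exactly what the paper does. But your proposal never actually produces an example: you outline a connected-sum reduction to an auxiliary knot $J$ and then concede that ``pinning down an explicit example that meets all the constraints at once \dots\ is where the work lies.'' That \emph{is} the proposition. Worse, both of your concrete candidates for $J$ fail. The figure-eight is thin with $\tau=0$, so by \cite[Proposition~6]{NOT} one has $\hthe\neq 0$ only when $sl=2\tau-1=-1$; but the maximal self-linking number of the figure-eight is $-3$, so every transverse figure-eight has $\hthe=0$. And $3$-braids cannot work either: by Proposition~\ref{3braid-covers}, a right-veering transverse $3$-braid has \emph{tight} branched double cover, so you cannot get overtwistedness there. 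In effect your reduction asks for a $J$ that already satisfies three of the four conclusions of the proposition, so the connected sum buys almost nothing.

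The paper's argument is quite different and uses no connected sums. It takes the Etnyre--Honda pair $K_1,K_2$ of transverse $(3,2)$-cables of the trefoil, with $sl(K_1)=sl(K_2)=3$, where $K_1$ transversely destabilizes but $K_2$ does not. From \cite{NOT} one has $\hthe(K_2)\neq 0$, so $K_2$ is right-veering; since $K_1$ destabilizes, this common value of $sl$ is non-maximal. The key trick for the branched cover is indirect: $K_1$ and $K_2$ are related by an SZ-move, hence by a negative flype, and by \cite{HKP} their branched double covers are contactomorphic. Since $K_1$ is a stabilization (hence non-right-veering), $(\Sigma(K_1),\xi_{K_1})$ is overtwisted, and therefore so is $(\Sigma(K_2),\xi_{K_2})$. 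Finally $\psi(K_2)=0$ simply because the relevant bigraded summand of Khovanov homology vanishes. The idea you are missing is to borrow the ``bad'' properties (non-maximal $sl$, overtwisted cover) from a transversely distinct partner $K_1$ that shares the same classical data and the same branched cover, rather than trying to build them into a single braid directly.
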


\begin{proof}
Consider the transverse knots $K_1$, $K_2$ that are the $(3,2)$-cables of the
trefoil  discovered by Etnyre-Honda \cite{EH}, see also \cite[Proposition
5]{NOT}. These examples show that this smooth knot type is not transversely
simple: the knots $K_1$ and $K_2$ 
have $sl(K_1)=sl(K_2)=3$ but are not transversely
isotopic. Indeed, $K_1$ can be transversely destabilized but $K_2$ cannot.  By
\cite{NOT}, $\hthe (K_2) \neq 0$, thus by Proposition \ref{vanish}, $K_2$ is
right-veering. Since $sl(K_2)=sl(K_1)$ is non-maximal, the inequalities
$sl< s-1$ and $sl <2 \tau-1$ must be strict. The knots $K_1$, $K_2$
can be represented as transverse push-offs of Legendrian knots $L_1$, $L_2$
such that $L_1$ and $L_2$ differ by a ``SZ-move'' \cite[Figure 6]{NOT}; 
by \cite[Proposition 2.8]{LNS} this implies that $K_1$ and $K_2$ can be related
via ``negative flypes''
(see \cite{LNS} for definitions of these modifications), and then by
\cite[Theorem 1.2]{HKP}
it follows that the branched double covers $(\Sigma(K_1), \xi_{K_1})$ and 
$(\Sigma(K_2), \xi_{K_2})$ are contactomorphic. Since 
$(\Sigma(K_1), \xi_{K_1})$ is overtwisted, so is $(\Sigma(K_2), \xi_{K_2})$.
Finally, $\psi(K_2) =0$ (as stated in \cite{NOT}, the Khovanov homology
component of the relevant bi-degree is trivial).
\end{proof}

Note, however, that the converses of Propositions \ref{vanish},
\ref{sl}, and \ref{branch} hold if the braid is {\em quasipositive}
(rather than simply right-veering). Recall 

\begin{definition} A braid $\sigma$ is called quasipositive if it is
a product of conjugates of standard generators, i.e. 
$\sigma = \prod w \sigma_i w^{-1}$. 
\end{definition}

As an automorphism of the punctured disk, $\sigma$ is represented as a product
of (non-standard) positive half-twists.

\begin{definition} We say that a transverse link $L$ is quasipositive if $L$
can be represented by a quasipositive braid. 
\end{definition}

 Orevkov proved \cite{Or} that a quasipositive
braid remains quasipositive after a positive Markov destabilization. Thus, in
contrast to Proposition \ref{prop-stab-rv}  {\em every}
braid representing a quasipositive transverse link will be quasipositive.   

The following properties are well-known. The statements about transverse
invariants follow from Proposition \ref{functo}, as one can resolve a number of
positive crossings in a quasipositive braid to obtain the trivial braid.

\begin{prop} \label{quasipos} Suppose $L$ is a quasipositive transverse knot.
Then

1) $sl(L) = 2 g^*-1 = 2 \tau -1 = s -1$; 

2) branched double cover  $(\Sigma_(L),\xi_L)$ is Stein fillable, 

3) transverse invariants $\psi$ and $\hthe$ do not vanish. 

\end{prop}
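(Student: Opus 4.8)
I would establish the three statements in turn.

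\emph{Part (1).} The sharpness of the slice--Bennequin inequalities I would read off directly from a band presentation. Write $\sigma = a_1 \cdots a_n \in B_m$ with each $a_k = w_k \sigma_{i_k} w_k^{-1}$ a positive band, so that $L$ bounds the associated quasipositive Seifert surface $F$ built from $m$ disks and $n$ bands; then $\chi(F) = m - n$, and since $\partial F = L$ is a knot, $F$ is connected with one boundary component and $\chi(F) = 1 - 2g(F)$. Each positive band contributes $+1$ to the writhe, so the self-linking number of the closure is $sl(L) = n - m = -\chi(F) = 2g(F) - 1$. Pushing $F$ into $B^4$ gives $g^*(L) \le g(F)$, hence $sl(L) \ge 2g^*(L) - 1$; combined with the slice--Bennequin inequality $sl(L) \le 2g^*(L) - 1$ of \cite{Ru}, this forces $sl(L) = 2g^*(L) - 1$. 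Then, using the genus bounds $\tau(L) \le g^*(L)$ and $s(L) \le 2g^*(L)$,
\[
sl(L) \le 2\tau(L) - 1 \le 2g^*(L) - 1 = sl(L), \qquad sl(L) \le s(L) - 1 \le 2g^*(L) - 1 = sl(L),
\]
so all four quantities coincide.

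\emph{Part (2).} As in the proof of Proposition~\ref{branch}, $(\Sigma(L), \xi_L)$ is supported by an open book whose page is the double cover $S$ of the disk $\D$ branched over $Q$ and whose monodromy $\phi$ is the lift of the braid monodromy $\sigma$. A positive half-twist $\sigma_i$, supported near the arc $[x_i, x_{i+1}]$, lifts to a right-handed Dehn twist about the embedded curve in $S$ double-covering that arc; hence a positive band $w\sigma_i w^{-1}$ lifts to a conjugate of a right-handed Dehn twist, which is again a right-handed Dehn twist. Thus $\phi$ is a product of $n$ right-handed Dehn twists, the open book is positive, and $(\Sigma(L), \xi_L)$ is Stein fillable by the Loi--Piergallini/Giroux correspondence.

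\emph{Part (3).} Here the plan is to strip the bands off one at a time and apply Proposition~\ref{functo}. Since conjugating a braid does not change the transverse isotopy type of its closure, I would replace $\sigma = a_1 \cdots a_n$ (with $a_k = w_k \sigma_{i_k} w_k^{-1}$) by $w_n^{-1}\sigma w_n = \bigl(w_n^{-1} a_1 \cdots a_{n-1} w_n\bigr)\sigma_{i_n}$, a braid word literally ending in the single Artin generator $\sigma_{i_n}$. Deleting that generator is exactly the positive-crossing removal of Proposition~\ref{functo}, and the resulting braid $w_n^{-1}(a_1 \cdots a_{n-1})w_n$ is conjugate to $a_1 \cdots a_{n-1}$ --- again quasipositive, with one fewer band. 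Iterating yields a sequence of transverse links $L = L_n, L_{n-1}, \dots, L_0$, with $L_0$ the closure of the trivial $m$-strand braid, i.e. the $m$-component transverse unlink, together with, at each stage, a homomorphism carrying $\hthe(L_k)$ to $\hthe(L_{k-1})$ and $\psi(L_k)$ to $\psi(L_{k-1})$. Since $\hthe$ and $\psi$ of the transverse unlink are nonzero (\cite{OST, Pla-Kh}) and a homomorphism cannot send $0$ to a nonzero element, nonvanishing propagates back up the sequence, so $\hthe(L) \neq 0$ and $\psi(L) \neq 0$.

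\emph{Main obstacle.} The only step requiring genuine care is matching the band removal in Part~(3) to the exact hypothesis of Proposition~\ref{functo}, which removes a \emph{single Artin generator} $\sigma_i$ from a braid word rather than a positive band (which as a word contains many cancelling letters): one must carry out the conjugation maneuver producing a word that literally ends in $\sigma_{i_n}$ and keep track of the conjugating braid at each stage. The remaining inputs --- nonvanishing of the invariants of the transverse unlink, the fact that a half-twist lifts to a single right-handed Dehn twist in the branched cover, and sharpness of slice--Bennequin for quasipositive surfaces --- are standard and present no real difficulty.
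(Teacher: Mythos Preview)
Your proposal is correct and follows exactly the line the paper sketches: the paper declares the statements well-known and only hints that for Part~(3) one resolves positive crossings in a quasipositive word to reach the trivial braid, which is precisely what you do. One small simplification in Part~(3): the conjugation maneuver you flag as the main obstacle is unnecessary, since in the quasipositive word $w_1\sigma_{i_1}w_1^{-1}\cdots w_n\sigma_{i_n}w_n^{-1}$ the letter $\sigma_{i_n}$ is already literally a single Artin generator appearing in the braid word, and deleting it leaves the word $a_1\cdots a_{n-1}\,w_n w_n^{-1}$, which represents $a_1\cdots a_{n-1}$ in $B_m$ --- so Proposition~\ref{functo} applies directly at each step without any preliminary conjugation.
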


In a sense, quasipositivity is a very strong condition. It would be
interesting
to define a weaker notion of ``strong right-veering''  that 
would still imply the analog of Proposition \ref{quasipos}.

\section{Right-veering 3-braids}

In this section, we prove the following 

\begin{theorem} \label{theta-3braids} Let $K$ be a transverse link represented
by a 3-braid $\beta \in B_3$. Then $\beta$ is right-veering if and only if
$\hthe(K) \neq 0$.
\end{theorem}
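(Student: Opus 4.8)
The plan is to prove Theorem \ref{theta-3braids} by reducing an arbitrary right-veering 3-braid to one of a few standard normal forms, and checking non-vanishing of $\hthe$ for those. The "only if" direction is already contained in Proposition \ref{vanish}, so the work is entirely in the "if" direction: assuming $\beta \in B_3$ is right-veering, show $\hthe(K) \neq 0$.

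First I would invoke the classification of conjugacy classes of 3-braids (Murasugi's normal form, as used by Baldwin and Birman--Menasco): up to conjugacy every $\beta \in B_3$ can be written as $h^d \cdot w$, where $h = (\sigma_1\sigma_2)^3$ is the full twist generating the center, $d \in \ZZ$, and $w$ is a word of a controlled shape (a product $\sigma_1 \sigma_2^{-a_1}\sigma_1\sigma_2^{-a_2}\cdots$ with $a_i \geq 0$, or one of a short list of exceptional families such as $\sigma_1^{k}$, $\sigma_2^{k}$, $\sigma_1^k\sigma_2^{-1}$). The next step is to translate "right-veering" into a constraint on $(d,w)$: since the fractional Dehn twist coefficient of a 3-braid is essentially read off from $d$ together with the Nielsen--Thurston type of $w$, right-veering forces $d$ to be large enough that the negative-twist part cannot push any arc to the left. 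Concretely I expect the right-veering 3-braids to be, up to conjugacy, exactly $h^d w$ with $d \geq 1$ together with the periodic/reducible exceptional cases that are manifestly right-veering (e.g. $\sigma_1^k$ with $k \geq 0$, $h^d \sigma_1^{-k}$-type words only when $d$ dominates). This is where Proposition \ref{sigma-negative}/Corollary \ref{non-rv-form} and the Dehornoy-ordering description enter: a non-right-veering 3-braid is conjugate to one with a $\sigma_1$-negative word, and one checks that in Murasugi normal form this cannot happen once $d \geq 1$ (for the generic pseudo-Anosov piece) — so the contrapositive pins down the right-veering ones.

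Then, for each surviving normal form, I would exhibit an explicit grid diagram and show directly that the distinguished cycle $\hthe$ (upper-right corners of the X's) is not a boundary in $\CFK(m(K))$. For the base cases $h^d$ and small positive words this is a finite check. For the generic case $h^d \sigma_1\sigma_2^{-a_1}\cdots\sigma_1\sigma_2^{-a_m}$ with $d \geq 1$, I would instead argue by the functoriality of Proposition \ref{functo}: such a braid is obtained from a braid with $\hthe \neq 0$ by \emph{adding} negative crossings (the $\sigma_2^{-a_i}$) to a positively-twisted braid $h^d \sigma_1^{m}$, whose $\hthe$ is nonzero since $h^d\sigma_1^m$ is (quasi)positive for $d\geq 1$, and the negative-crossing maps send $\hthe$ to $\hthe$. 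The delicate point is that adding negative crossings only gives $F(\hthe) = \hthe_{\text{new}}$, not injectivity of $F$, so this alone does not prove non-vanishing — I would need to supplement it with a direct grid computation showing the relevant $\hthe_{\text{new}}$ survives, or use the $\theta^-$ refinement from the proof of Proposition \ref{sl}: track the Alexander/Maslov grading of $\theta^-$ in $HFK^-(m(K))$ and show that for $C > 1$ it sits at the top of the $U$-tower, hence is not in the image of $U$.

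The main obstacle, as I see it, is exactly this last gap: organizing the grid-diagram bookkeeping so that non-vanishing is genuinely proved and not merely "consistent with" the functoriality maps. I expect the cleanest route is to find, for each right-veering normal form, a grid presentation in which the upper-right-corner generator is the \emph{unique} generator of its $(\text{Maslov},\text{Alexander})$ bidegree, so it is trivially a non-boundary; producing such grids uniformly across the family $h^d\sigma_1\sigma_2^{-a_1}\cdots$ (rather than case-by-case for each $a_i$) is the real technical content, and is presumably where Dehornoy's floor function and the structure of $\sigma$-positive words get used to keep the combinatorics under control — the same machinery that the paper sets up for Theorem \ref{C>1}.
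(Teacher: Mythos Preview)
You have correctly identified the overall architecture: the ``only if'' direction is Proposition~\ref{vanish}, the ``if'' direction goes through Murasugi's classification, the easy normal forms (the quasipositive and positive ones) are dispatched immediately, and functoriality (Proposition~\ref{functo}) is the tool that should propagate non-vanishing between braids. You also correctly spot the gap in your own argument: starting from the quasipositive braid $h^d\sigma_1^m$ and \emph{adding negative crossings} sends $\hthe$ forward, so non-vanishing of the source tells you nothing about the target.

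The fix is simpler than the alternatives you propose, and it is the key idea you are missing: reverse the direction of the reduction. Rather than starting from the most positive braid and adding negative crossings, start from the \emph{most negative} right-veering braid in each family and add \emph{positive} crossings. Concretely, every braid in families (a') and (b') (with $d\geq 1$) is obtained from the single one-parameter family $h\cdot\sigma_2^{-k}=\sigma_1\sigma_2\sigma_2\sigma_1\sigma_2^{-k+2}$ by inserting positive crossings, so by Proposition~\ref{functo} it suffices to show $\hthe(h\cdot\sigma_2^{-k})\neq 0$ for all $k>0$. This is then a single direct grid computation: one draws the obvious grid for $\sigma_1\sigma_2^2\sigma_1\sigma_2^{-k+2}$ and checks that there are \emph{no empty rectangles whatsoever} with top-left and bottom-right corners on the distinguished $\hthe$-cycle, so $\hthe$ is not in the image of $\partial$. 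No bidegree-uniqueness argument, no $\theta^-$/$U$-tower analysis, and no Dehornoy machinery is needed for the 3-braid case; the grid diagram has a simple enough staircase pattern that the rectangle check is immediate and uniform in $k$.
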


\begin{proof} If $\beta$ is non-right-veering, then $\hthe(K)= 0$ by
Proposition \ref{vanish}. We need to prove that $\hthe(K) \neq 0$ for
right-veering braids.
 
According to Murasugi's classification \cite{Mur}, 3-braids come in the
following types (up to conjugation):

(a) $h^d \sigma_1 \sigma_2^{-a_1} \sigma_1 \sigma_2^{-a_2} \dots
\sigma_1 \sigma_2^{-a_n}$, where $a_i \geq 0$, some $a_i>0$;

(b) $h^d \sigma_2^m$, where $m \in \ZZ$;

(c) $h^d \sigma_1^m \sigma_2 ^{-1}$, where $m=-1, -2, -3$.  
 
Here, $h=(\sigma_1 \sigma_2)^3$ is a full positive twist of the 3-braid, and
the exponent $d$ is an arbitrary integer.  
 
We notice that some of the cases above clearly give non-right-veering braids.
Indeed, if $d\leq 0$  in (a), (c), or $d<0$ in (b), or $d=0$, $m<0$ in (b), we
get braid words where $\sigma_2$ or $\sigma_1$ has only negative exponents. To
prove the theorem, 
it suffices to show that $\hthe(K)$ is non-zero in all of the remaining cases, 
namely

(a') $h^d \sigma_1 \sigma_2^{-a_1} \sigma_1 \sigma_2^{-a_2} \dots
\sigma_1 \sigma_2^{-a_n}$, where $d > 0$, $a_i \geq 0$, some $a_i>0$;

(b') $h^d \sigma_2^m$, where $d >0$,  $m \in \ZZ$, or $d=0$, $m\geq 0$; 

(c') $h^d \sigma_1^m \sigma_2 ^{-1}$, where $d>0$, $m=-1, -2, -3$.

Next, all braids in (c') are clearly quasipositive, so $\hthe(K)$ is non-zero
by \cite{Baldwin}; see also Proposition \ref{quasipos} above. Non-vanishing is
also obvious for the
braids in (b') with $m\geq 0$, since these are all positive.

Next, we will show that
\begin{equation} 
\label{eq:model}
\hthe(h \cdot \sigma_2 ^{-k}) \neq 0\text{ for all }  k>0.
\end{equation}

These are braids from (b') with $d=1$. Once this case is established, the rest
of (b') and (a') follows from the functoriality (Proposition \ref{functo}):
indeed, all the
other braids in (b') and (a') can be obtained from the model braid  $h \cdot
\sigma_2 ^{-k}$ by insertion of additional positive crossings. 

Observe that 
$ h \cdot \sigma_2^{-k} = (\sigma_1 \sigma_2)^3 \sigma_2^{-k} = 
\sigma_1
\sigma_2 \sigma_2 \sigma_1 \sigma_2^{-k+2}$.

\begin{figure}[ht]
  \includegraphics[scale=0.8, keepaspectratio]{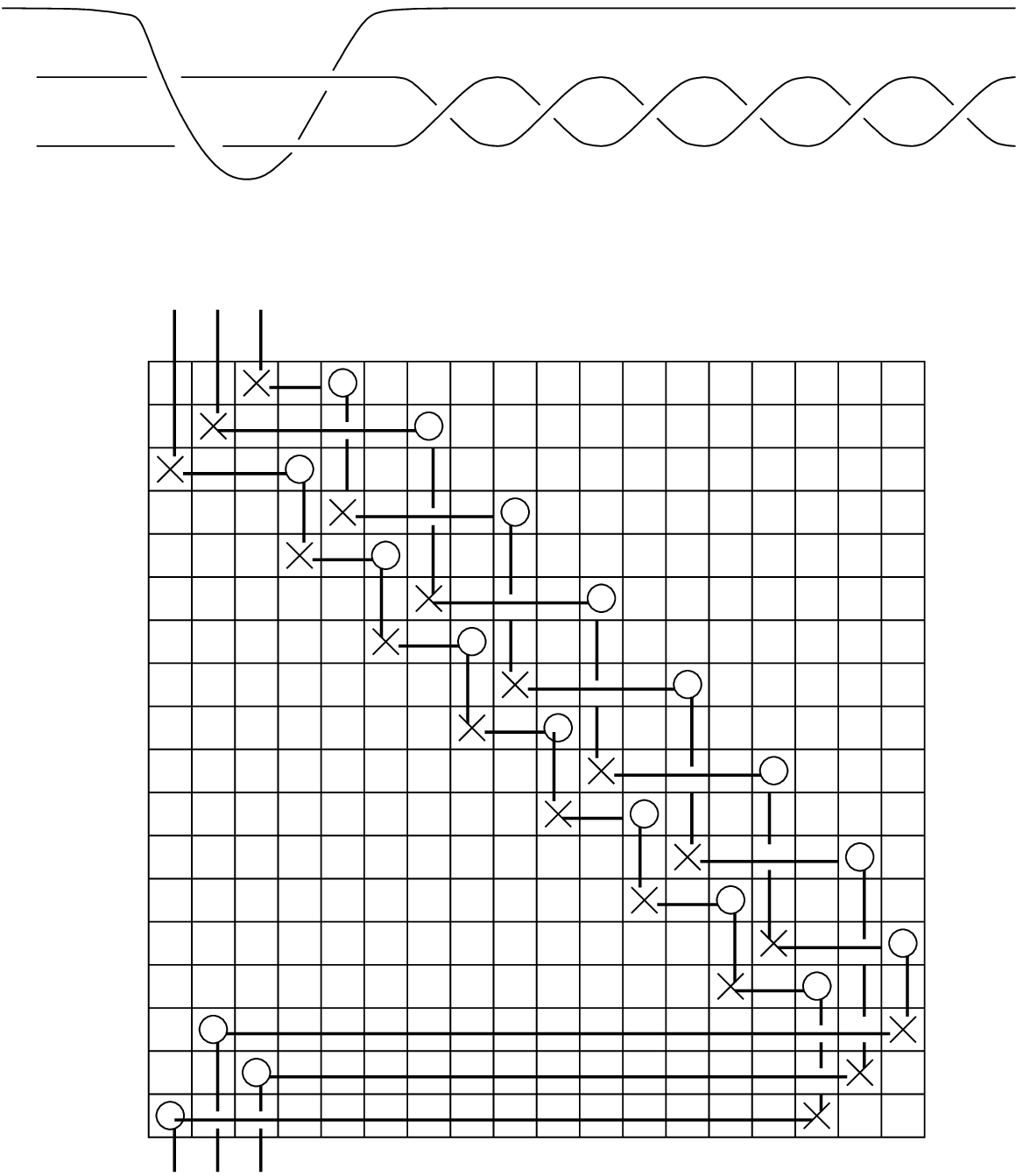}
  \caption{A grid diagram for $\sigma_1 \sigma_2 \sigma_2 \sigma_1
\sigma_2^{-k}$.}
  \label{grid1-3braid}
\end{figure}

We establish (\ref{eq:model}) for braids of the form 
$\sigma_1 \sigma_2 \sigma_2 \sigma_1 \sigma_2^{-k}$ with $k>0$ 
by a direct
examination of a grid diagram representing the corresponding transverse link.
The grid diagram for 
is shown
in Figure \ref{grid1-3braid}; in our picture $k=6$, but the general pattern
should be clear to the reader. The element $\hthe \in \HFK (m(K))$ is given by 
the cycle made of ``upper right corners'' of the $X$-markings in the figure. 
If this cycle is null-homologous, it is the boundary of another element in 
$\CFK(m(K))$. Now, recall the definition of the boundary map in grid homology
\cite{MOS}. For a generator $\mathbf{x}$ of $\CFK$, given by an $n$-tuple of
intersection points $\mathbf{x}=(x_1,x_2, \dots, x_n)$, the boundary
$\partial \mathbf{x}$ is obtained by summing over all $n$-tuples of the form 
$\mathbf{y}= (x_1, \dots, y_i, \dots, y_j, \dots x_n)$ that differ from 
$\mathbf{x}=(x_1,x_2, \dots, x_n)$ by exactly two entries, and such that there
is an empty rectangle in the grid diagram whose top right and bottom left
corners are $x_i, x_j$, while top left and bottom right corners are $y_i, y_j$.
(A rectangle is ``empty'' if it contains no X and O markings and none of the
entries of  $\mathbf{x}$ and $\mathbf{y}$.) 
Examining the diagram, we immediately see that no such empty rectangles exist
in Figure
\ref{grid1-3braid}. 
\end{proof}

\begin{figure}[ht]
  \includegraphics[scale=0.8, keepaspectratio]{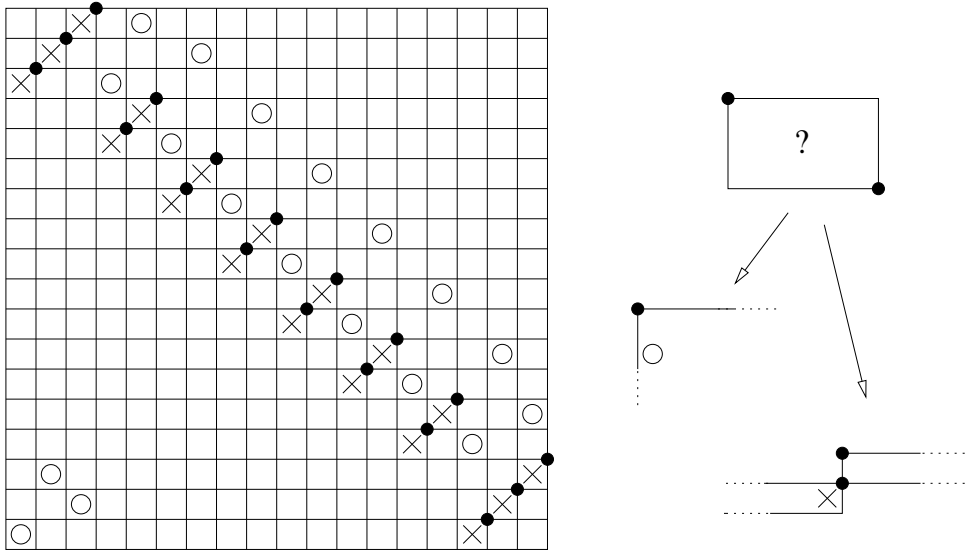}
  \caption{The cycle $\hthe$. Looking for empty triangles.}
  \label{grid-3braid}
\end{figure}

It should be noted that 3-braids are quite special and satisfy many properties
that might not hold in general. In particular, we have the following

\begin{prop} \label {3braid-covers} A transverse 3-braid is right-veering if and
only if its branched double cover is tight.
\end{prop}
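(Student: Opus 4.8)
The plan is to exploit that, for a $3$-braid, the branched double cover is supported by an open book whose page is a genus-one surface with one boundary component (a once-holed torus), a situation in which tightness is completely governed by right-veering. One implication is immediate: if $(\Sigma(L),\xi_L)$ is tight then $L$ is right-veering, by Proposition \ref{branch}. So I would assume $L$ is represented by a right-veering $3$-braid $\sigma\in B_3$ and prove that $(\Sigma(L),\xi_L)$ is tight.

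First set up the cover, as in the proof of Proposition \ref{branch}: $(\Sigma(L),\xi_L)$ carries an open book $(S,\phi)$, where $p\colon S\to\D$ is the double cover branched over $Q=\{x_1,x_2,x_3\}$ and $\phi$ is the lift of $\sigma$. Since $\D$ has three branch points, $S$ is a once-holed torus, and under the standard isomorphism $B_3\cong\mathrm{MCG}(S,\d S)$ the monodromy $\phi$ is the image of $\sigma$ (each $\sigma_i$ lifting to the Dehn twist about the circle $c_i=p^{-1}([x_i,x_{i+1}])$). In particular $\phi$ commutes with the deck involution $\iota$.

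The heart of the argument is to show that $\sigma$ is right-veering as a $3$-braid if and only if $\phi$ is right-veering as a diffeomorphism of $S$. I would use the correspondence between arcs in $(\D,Q)$ and $\iota$-invariant arcs in $S$: an $\iota$-invariant essential arc meets the fixed-point set of $\iota$ in a single point, necessarily one of the three Weierstrass points $\tilde w_i=p^{-1}(x_i)$, and it projects two-to-one onto an essential arc of $(\D,Q)$ joining $\d\D$ to $x_i$; conversely every such arc in $(\D,Q)$ lifts to an $\iota$-invariant arc. Two observations make this into the claimed equivalence. First, \emph{every} essential arc in $S$ is isotopic to an $\iota$-invariant one: equipping $S$ with an $\iota$-invariant hyperbolic metric (the lift of a hyperbolic orbifold structure on $(\D,Q)$), the hyperelliptic involution $\iota$ acts as $-\mathrm{id}$ on $H_1$, hence preserves the isotopy class of every essential arc, hence preserves its (unique) geodesic representative. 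Second, near a point of $\d S$ the branched cover $p$ is an orientation-preserving local diffeomorphism, $\phi$ sends the lift of an arc $a$ to the lift of $\sigma(a)$, and pulling arcs taut upstairs is compatible with pulling them taut downstairs, by exactly the bigon argument used in the proof of Proposition \ref{branch}. Combining these, ``$\phi(\alpha)$ lies to the right of $\alpha$'' for an arc $\alpha\subset S$ is equivalent to ``$\sigma(a)$ lies to the right of $a$'' for the corresponding arc $a\subset(\D,Q)$, and symmetrically; so $\phi$ is right-veering iff $\sigma$ is.

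Finally I would invoke the theorem of Honda--Kazez--Matić \cite{HKM2} that an open book with once-holed-torus page supports a tight contact structure if and only if its monodromy is right-veering. Applied to $(S,\phi)$ this yields that $(\Sigma(L),\xi_L)$ is tight $\iff$ $\phi$ is right-veering $\iff$ $\sigma$ is right-veering, which is the Proposition (and incidentally reproves Proposition \ref{branch} in the $3$-braid case). I expect the main obstacle to be the two observations above — that right-veering may be tested on $\iota$-invariant arcs alone and that ``lying to the right'' descends cleanly through the branched cover. A more computational alternative would avoid them by running through Murasugi's classification as in the proof of Theorem \ref{theta-3braids}, handling the (quasi)positive families via Proposition \ref{quasipos} and identifying the once-holed-torus open books of the remaining families ($h^d\sigma_2^m$ with $m<0$, and $h^d\sigma_1\sigma_2^{-a_1}\cdots\sigma_1\sigma_2^{-a_n}$ with $d>0$) directly. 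One should also pin down the precise once-holed-torus criterion being cited, since the periodic and reducible monodromies that arise here are exactly the delicate cases of that criterion.
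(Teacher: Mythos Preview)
Your proposal is correct and follows essentially the same route as the paper: show that the lifted monodromy $\phi$ on the once-holed torus is right-veering (by arguing that every essential arc in $S$ descends to an arc in $(\D,Q)$, so a non-right-veering arc upstairs would produce one downstairs) and then invoke the Honda--Kazez--Mati\'c tightness criterion for genus-one open books. The paper justifies the arc-descent step via the isomorphism $B_3\cong\mathrm{Map}(S,\partial S)$ (writing any arc as the image of a standard one under a lifted braid) rather than your $\iota$-invariance/geodesic argument, and the correct citation for the once-holed-torus criterion in this paper's bibliography is \cite{HKM-cont}, not \cite{HKM2}.
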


\begin{proof} The ``if'' part is always true by Proposition \ref{branch}; 
we now establish the ``only if'' part, which is specific to 3-braids.
This is an easy corollary of known results. Indeed, the branched
double cover of a 3-braid has an open book decomposition whose page is a
punctured torus (or rather, a genus one surface $S$ with one boundary
component).
The page arises as a double cover of the disk branched over 3 points, and the
braid monodromy lifts to a diffeomorphism $\phi$ of the page $S$, i.e.
the monodromy of the open book. Every arc $\alpha$ with endpoints on $\d S$
covers an arc $a$ in $(\D, \{x_1, x_2, x_3\})$. (Note
that this statement is specific to the genus 1 case.) Indeed,
$\alpha$ can be represented as an image $\rho(\alpha_0)$ of a standard arc
$\alpha_0$ in $S$ covering an arc $a_0$ in  $(\D, \{x_1, x_2, x_3\})$, under
some diffeomorphism $\rho \in Map(S)$. Since $S$ has genus one and one boundary
component, any such $\rho$ is a lift of an element  
$r \in Map (\D, \{x_1, x_2, x_3\})$, so that $\alpha$ covers $r(a_0)$. 

Now suppose that the given transverse 3-braid is right-veering. Then the
corresponding open book for the double cover must be right-veering as well, 
since a non-right-veering arc ``upstairs'' would cover a non-right-veering arc
``downstairs''. Indeed,
if the image $\phi(\alpha)$ is to the left of $\alpha$ for some arc $\alpha$
in $S$, 
and $\alpha$ covers an arc $a$ in $(\D, \{x_1, x_2, x_3\})$, then the image of
$a$ under the braid monodromy must lie to the left of $a$ (one should be
careful with non-essential intersections, but the arcs can be pulled taut both
upstairs and downstairs as in the proof of Proposition \ref{branch}).
Once we know that $(S, \phi)$ is right-veering, we can  use a result of
Honda--Kazez--Mati\'c \cite{HKM-cont}: right-veering open books with a punctured
torus page support tight contact structures. 
\end{proof}

Combining Proposition \ref{3braid-covers}, Proposition \ref{vanish},
and
 \cite[Theorem 1.2]{HKM-cont}, we have 

\begin{cor} Let $K$ be a transverse 3-braid, and suppose that $\hthe(K)
\neq 0$ or $\psi(K) \neq 0$. Then the branched double cover $(\Sigma(K),
\xi_K)$ is tight; moreover, the Heegaard Floer contact invariant $c(\xi_K)$ is
non-zero. 
\end{cor}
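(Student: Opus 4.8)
The plan is to bootstrap the corollary from the results already established. First I would observe that the hypothesis forces the given $3$-braid $\beta$ representing $K$ to be right-veering: if $\beta$ were non-right-veering, then $K$ would admit a non-right-veering braid representative, and Proposition~\ref{vanish} would give $\hthe(K)=0$ and $\psi(K)=0$, contradicting the assumption. (Alternatively, $\hthe(K)\neq 0$ already forces right-veering of $\beta$ by Theorem~\ref{theta-3braids}, while $\psi(K)\neq 0$ forces it by Proposition~\ref{vanish}.) So in either case $\beta$ is a right-veering transverse $3$-braid.

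Next I would apply Proposition~\ref{3braid-covers} directly to conclude that the branched double cover $(\Sigma(K),\xi_K)$ is tight, which is the first assertion. For the statement about the Ozsv\'ath--Szab\'o contact invariant, I would reuse the open book picture from the proof of Proposition~\ref{3braid-covers}: $\Sigma(K)$ carries an open book $(S,\phi)$ whose page $S$ is a once-punctured torus, arising as the double cover of the disk branched over the three marked points, and whose monodromy $\phi$ lifts the braid monodromy of $\beta$; moreover, as shown there, right-veering of $\beta$ implies right-veering of $\phi$, since a non-right-veering arc upstairs projects (after pulling taut both upstairs and downstairs) to a non-right-veering arc downstairs. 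Since $\xi_K$ is the contact structure supported by $(S,\phi)$, I would then invoke \cite[Theorem~1.2]{HKM-cont}, which asserts that a right-veering open book with once-punctured torus page supports a contact structure with non-vanishing contact invariant; applied to $(S,\phi)$ this yields $c(\xi_K)\neq 0$.

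The corollary is essentially a bookkeeping assembly of Propositions~\ref{vanish} and~\ref{3braid-covers} with the once-punctured-torus theorem of \cite{HKM-cont}, so I do not expect a substantial obstacle. The only point that deserves a line of care is the passage from ``$K$ is a right-veering transverse link'' to ``this particular $3$-braid $\beta$ is right-veering'' — but this is immediate here, because the hypothesis rules out \emph{every} non-right-veering braid representative of $K$, in particular $\beta$ itself, and right-veering of $\beta$ is all that Proposition~\ref{3braid-covers} and the once-punctured-torus result require.
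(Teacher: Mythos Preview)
Your proof is correct and follows essentially the same route as the paper: the paper states the corollary as an immediate consequence of combining Proposition~\ref{vanish}, Proposition~\ref{3braid-covers}, and \cite[Theorem~1.2]{HKM-cont}, which is exactly the assembly you carry out. Your added care about distinguishing right-veering of the link versus right-veering of the particular $3$-braid $\beta$, and your explicit unpacking of the once-punctured-torus open book to justify invoking \cite{HKM-cont} for the non-vanishing of $c(\xi_K)$, are appropriate elaborations of what the paper leaves implicit.
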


It is interesting to compare the above corollary with the results of
\cite{BaPl}, where in certain cases non-vanishing of $c(\xi_K)$ is derived from
non-vanishing of $\psi(K)$ via the Ozsv\'ath--Szab\'o spectral sequence (see
\cite{OS-double}, \cite{LRob}).

The analog of Theorem \ref{theta-3braids} does not hold for the transverse
invariant $\psi$ in Khovanov homology. Indeed, while right-veering braids from
(a') are all quasi-alternating and have $\psi \neq 0$, the  braid $h \cdot
\sigma_2 ^{-k}$ from (b') considered in the proof above has $\psi(h \cdot
\sigma_2^{-k}) = 0$. See \cite{BaPl} for discussion.

\section{Fractional Dehn twist coefficient} \label{FDTC}

It is natural to ask whether Theorem \ref{theta-3braids} extends to
braids of higher index. It would perhaps be too optimistic to expect that 
the transverse invariant $\hthe$ is non-zero for {\em all} right-veering
braids. Our Theorem \ref{C>1} establishes that $\hthe$ does not vanish as long
as the braid has enough positive twisting.

%\begin{theorem} \label{FDTC} If the transverse link $K$ can be represented by a
%braid $\beta$ with $C(\beta) >1$, then $\hthe(K) \neq 0$.
%\end{theorem}

We will obtain Theorem \ref{C>1} as a corollary of a stronger
statement, Theorem \ref{D>1} below. Theorem \ref{D>1} uses braid orderings and 
is stated in terms of Dehornoy's floor of a braid. We first recall the notion of
Dehornoy floor and prove Theorem \ref{D>1}. Then, we will discuss 
the fractional Dehn twist coefficient and its relation to Dehornoy's floor and
complete the the proof of Theorem \ref{C>1}.

As usual, let $\Delta \in B_n$ denote the
Garside element, 
$$\Delta = (\sigma_1 \sigma_2 \dots \sigma_{n-1})
(\sigma_1 \sigma_2 \dots \sigma_{n-2})\dots (\sigma_1 \sigma_2) \sigma_1.
$$
Recall that $\Delta$ generates the center of the group $B_n$.
Geometrically, $\Delta$ is the positive half-twist on all strands, so 
$$
\Delta ^2 = (\sigma_1 \sigma_2 \dots \sigma_{n-1})^n
$$ is the full twist.

\begin{definition} \cite{De} Let $\beta \in B_n$ be an arbitrary braid. Let $m$
be an
integer such that 
$\Delta^{2m} \preceq \beta \prec \Delta^{2m+2}$, where $\prec$ is Dehornoy's
ordering on $B_n$.
Then $m= \lfloor \beta \rfloor_D$ is called Dehornoy's floor of $\beta$.
\end{definition}

Note that several slightly
different definitions of $\lfloor \beta \rfloor_D$ exist in the literature
\cite{MN,It1,De}; the present one best fits our purposes.
Intuitively, Dehornoy's floor tells us how many positive full twists can be
extracted from a given braid, so that the ``leftover'' braid is still
non-negative in Dehornoy's ordering. (Since $\prec$ is a well-ordering, 
and for any given braid $\beta$ we have that $\Delta^{-N} \prec \beta \prec
\Delta^N$ when $N$ is very large, the floor function is well-defined.)
Note that Dehornoy's floor is
{\em
not} conjugacy invariant (and thus not an invariant of {\em closed braids}): for
example, $\lfloor \Delta^2 \sigma_1 \sigma_2^{-1} \rfloor_D =1$ but 
$\lfloor \Delta^2 \sigma_2 \sigma_1^{-1}  \rfloor_D =0$, despite the fact
that these two 3-braids are conjugate.

We are now ready to state our result.

\begin{theorem}   \label{D>1} If the transverse link $K$ can be represented by 
a braid $\beta$ with $\lfloor \beta \rfloor_D \geq 1$, then $\hthe(K) \neq 0$.
\end{theorem}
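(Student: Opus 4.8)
The plan is to reduce the statement to the model braids $h \cdot \sigma_2^{-k}$ already handled in the proof of Theorem \ref{theta-3braids}, using functoriality (Proposition \ref{functo}) together with the geometric description of Dehornoy's ordering. The hypothesis $\lfloor \beta \rfloor_D \geq 1$ means $\Delta^2 \preceq \beta$, i.e. $\Delta^{-2}\beta \succeq 1$, so $\Delta^{-2}\beta$ admits a $\sigma$-positive braid word; equivalently, $\beta = \Delta^2 \gamma$ where $\gamma$ has a $\sigma$-positive representative. The idea is that a $\sigma$-positive word, by definition, contains some $\sigma_i$ with no $\sigma_i^{-1}$ and no $\sigma_j^{\pm 1}$ for $j < i$; in particular it contains at least one positive generator that is not cancelled, and all its negative letters are $\sigma_j^{-1}$ with $j > i$. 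So after deleting some positive crossings from $\beta$ we can arrive at a braid of the form $\Delta^2 \cdot (\text{word in } \sigma_i^{\pm 1}, j\geq i,\text{ with only negative } \sigma_i\text{'s or no }\sigma_i)$, and the plan is to keep deleting positive crossings until we reach one of the ``maximally destabilized'' model configurations whose $\hthe$ we can compute directly from a grid diagram.

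Concretely, the steps I would carry out are: (1) Rewrite $\beta \sim \Delta^2 \gamma$ with $\gamma$ $\sigma$-positive, and observe via Proposition \ref{functo}(1) that it suffices to prove $\hthe \neq 0$ for \emph{some} transverse link obtained from $K$ by deleting positive crossings — so we may freely erase positive generators. (2) Use the $\sigma$-positive structure of $\gamma$: erase every positive $\sigma_j$ with $j \neq i$, and erase all but one $\sigma_i$ if $\gamma$ has positive $\sigma_i$'s (noting $\gamma$ has no $\sigma_i^{-1}$); what survives is $\Delta^2$ times a word involving only $\sigma_j^{\pm 1}$ for $j > i$ together with possibly a single $\sigma_i$. (3) Recognize that $\Delta^2 = (\sigma_1\cdots\sigma_{n-1})^n$ is a positive word, so it contributes many deletable positive crossings; the combinatorial core is then to show that whatever negative crossings remain (all in strands $> i$, hence ``far'' from the full twist) cannot kill the cycle. (4) Present a grid diagram for the resulting braid in which the upper-right-corner cycle $\hthe$ has no outgoing empty rectangles, exactly as in the proof of Theorem \ref{theta-3braids} and Figure \ref{grid1-3braid}; conclude $\hthe \neq 0$ for the reduced link, hence (pulling back along $F$) for $K$.

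The main obstacle I anticipate is step (4) — producing, uniformly in $n$ and in the shape of the leftover $\sigma$-positive word, a grid diagram for which one can \emph{see} that the canonical cycle supports no empty rectangle. Unlike the 3-braid case, here $\Delta^2$ is a genuinely $n$-strand full twist and the surviving negative crossings can be spread among several strands, so the diagram will be more intricate and the ``no empty rectangle'' verification less immediate; it may be cleanest to first normalize $\gamma$ (e.g. collect the negative letters, use braid relations to push the lone surviving $\sigma_i$ to a convenient spot, or handle the band generators of $\Delta^2$ carefully) before drawing the grid. A secondary subtlety is making sure the crossing-deletion cobordisms are genuinely \emph{positive}-crossing resolutions in the sense of Proposition \ref{functo}, i.e. that we only ever erase generators appearing with positive exponent in the chosen word; this is automatic from the $\sigma$-positive normal form but should be stated explicitly. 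Once the model grid diagram is in hand, the rest is routine.
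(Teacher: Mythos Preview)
Your overall strategy is exactly the paper's: write $\beta=\Delta^2\gamma$ with $\gamma\succeq 1$, use the $\sigma$-positive word to see that $\gamma$ has no $\sigma_1^{-1}$, then use functoriality (Proposition~\ref{functo}) to reduce to a family of model braids whose grid diagrams can be analyzed directly. The gap in your proposal is precisely the piece you flag in step~(4): you have not identified the model braids, and your steps (2)--(3) make this harder than it needs to be.

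Two concrete points. First, tracking the index $i$ for which $\gamma$ is $\sigma_i$-positive is unnecessary; the only fact you need is that the chosen word for $\gamma$ contains no $\sigma_1^{-1}$ (this holds whether $\gamma$ is $\sigma_1$-positive or $\sigma_1$-free). Second, and more importantly, the paper's model family is
\[
\beta_{k,n} \;=\; (\sigma_1\sigma_2\cdots\sigma_{n-1})(\sigma_{n-1}\cdots\sigma_2\sigma_1)\,(\sigma_2\sigma_3\cdots\sigma_{n-1})^{-k}\in B_n,
\]
i.e.\ strand~1 wraps once positively around all the others, while strands $2,\dots,n$ are twisted negatively $k$ times among themselves. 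For $k$ large enough, \emph{every} $\beta=\Delta^2\gamma$ with no $\sigma_1^{-1}$ in $\gamma$ is obtained from $\beta_{k,n}$ by inserting positive crossings (the inserted $\sigma_j$'s cancel surplus negative crossings at levels $j\geq 2$ and build up the rest of $\Delta^2$ and the positive letters of $\gamma$). The grid diagram for $\beta_{k,n}$ has exactly the same staircase pattern as Figure~\ref{grid1-3braid}, and the ``no empty rectangle with two corners on the $\hthe$-cycle'' check goes through verbatim. Once you pin down this single two-parameter family, your step~(4) becomes routine rather than the obstacle you anticipate; your attempt to erase crossings from $\gamma$ piecemeal while keeping track of $i$ produces a much less structured residual braid and is the source of the difficulty you foresee.
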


\begin{figure}[htb]
  \includegraphics[scale=0.8, keepaspectratio]{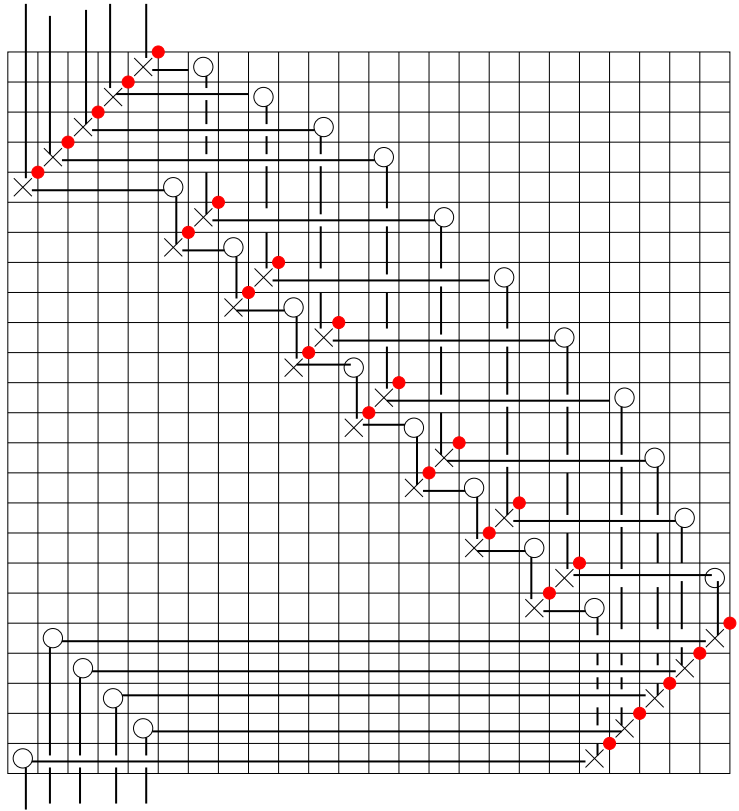}
  \caption{The cycle $\hthe$ for a braid $\beta_{n,k}$.}
  \label{big-grid}
\end{figure}

\begin{proof} We will use the same idea as in Theorem \ref{theta-3braids}: the
question can be reduced to a family of model braids, which can then be attacked
directly by via grid diagrams. 

Indeed, suppose that $\lfloor \beta \rfloor_D \geq 1$. This means that $\beta =
\Delta^2 \gamma$, where  $\gamma \succeq 1$ with respect to Dehornoy's order.
Then, $\gamma$ has a braid word which is either $\sigma_1$-positive or
$\sigma_1$-free. Both cases guarantee that the braid word for $\gamma$ has no 
entries of $\sigma_1$ with negative exponent.

For any $k>0$, consider a braid  $\beta_{k,n} \in B_n$ of the form 
$$
\beta_{k}= (\sigma_1 \sigma_2 \dots \sigma_{n-1}) 
(\sigma_{n-1} \dots \sigma_2 \sigma_1) (\sigma_2 \sigma_3 \dots
\sigma_{n-1})^{-k}.
$$
A braid $\beta_{n,k}$ is shown in Figure \ref{big-grid}. In these braids, all
the strands
except the first are  twisted together a
number of times in the negative direction, while the first strand twists
positively once around all the other strands. 

The family of braids $\beta_{k,n}$ is universal in the following sense: 
any given braid $\beta \in B_n$ with $\lfloor \beta \rfloor_D \geq 1$
can be obtained from a braid $\beta_{k,n}$ with $k$ large enough by insertion 
of a number of positive crossings. (These extra positive crossings will cancel
some of the negative crossings in $\beta_{k,n}$ and create the positive
crossings present in $\beta$; it is essential to notice that $\beta$ has no
negative crossings in the $\sigma_1$ level.) By functoriality of $\hthe$
\cite{Baldwin} (see  Lemma \ref{functo} above), now it suffices to check that
$\hthe(\beta_{k,n}) \neq 0$ for all $k,n$.

We examine a grid diagram for a braid  $\beta_{k,n}$, shown in Figure
\ref{big-grid}. This diagram has a similar pattern and the same features
as the diagram for a 3-braid we considered in  
Figure \ref{grid1-3braid}. As before, we can see that there are no empty
rectangles 
whose top left and bottom right corners would be given by the distinguished
intersection points forming the $\hthe$-cycle. Therefore, $\hthe \neq 0$.
\end{proof}

We now discuss the definition of the  fractional Dehn twist coefficient; 
its relation to Dehornoy's floor will give an immediate proof of Theorem
\ref{C>1}. The idea of the fractional Dehn twist coefficient 
first appeared in \cite{GO} and was developed in the context of open books and
contact topology in \cite{HKM}. For classical braids, 
a similar notion (via a somewhat different approach) was studied in \cite{Ma}.
A generalization of FDTC to the case of braids in arbitary open books, and a
detailed proof that different definitions are equivalent, is given in \cite{IK}.

To define FDTC of a given braid $\beta \in B_n$, equip the disk $\D \setminus
\{x_1, x_2, \dots x_n\}$ with a complete hyperbolic metric of finite volume
such that $\d \D$ is geodesic. Let $\ti{\D}$  be the universal cover of the
punctured disk;  $\ti{\D}$ admits an isometric embedding into the Poincar\'e
disk $\HH^2$. We can compactify  $\ti{\D}$ by adding points at infinity; the
result is a closed disk $\bar{\D}$. Choose a basepoint $* \in \d \D\setminus
\{x_1, x_2, \dots, x_n\}$ and its lift $\ti{*} \in \d \bar{\D}$. Let $\beta \in
B_n = Map (\D, \{x_1, \dots x_2\})$ be a braid. Consider the lift
$\ti{\beta}: \ti{\D} \to \ti{\D}$ with $\ti{\beta}(*) = \ti{*}$. It extends
uniquely to a homeomorphism $\bar{\beta}: \bar{\D} \to \bar{\D}$ that fixes  
the component $\ti C$ of the preimage $\pi^{-1}(\d \D)$ of $\d \D$ under the
covering map $\pi$, such that $\ti C$ contains $\ti{*}$. Notice that $\d {\bar
\D} \setminus \ti C$ can be identified with $\RR$, so $\bar{\beta}$ induces a
homeomorphism of $\RR$. Moreover, the identification  $\d {\bar
\D} \setminus \ti C \equiv \RR$ can be chosen so that the full twist around $\d
\D$ (i.e. the braid $\Delta^2 \in B_n$) induces the homeomorphism  
$x \mapsto x+1$. Since $\Delta$ is in the center of $B_n$, any braid $\beta
\in B_n$ will induce a homeomorphism of $\RR$ that commutes with the
translation $x \mapsto x+1$. This means that we have a map 
$$
\Theta: B_n \to \widetilde{Homeo}^+ (S^1),
$$
where ${Homeo}^+ (S^1)$ is the group of homeomorphisms
of $\RR$ that are lifts of orientation-preserving homeomorphisms of $S^1$.
The map $\Theta$ is called the {\em Nielsen--Thurston} map. 

Recall that a basic invariant of a map  $ h \in {Homeo}^+ (S^1)$ is given by its 
{\em translation number} defined as 
$$
T(h) = \lim_{N \to \infty} \frac{h^N(x) -x}{N},
$$
for any $x \in \RR$. Finally, 

\begin{definition} The fractional Dehn twist coefficient of a braid $\beta \in
B_n$ is defined as the translation number of $\Theta(\beta)$:
$$
C = T(\Theta(\beta)).
$$ 
\end{definition}
The FDTC is well-defined, ie independent of the choice of the hyperbolic metric
on the punctured disk and other choices, \cite{Ma}. 

Another definition, emphasizing the meaning of FDTC as the
amount of rotation about the boundary of $\D$, can be given in the spirit of
\cite{HKM}. One uses the Nielsen-Thurston
classification to find a free isotopy connecting (an iterate of)  $\phi$ to its
pseudo-Anosov, periodic, or reducible representative, and considers 
the winding number of an arc traced by a basepoint in $\d \D$ under this
isotopy. We refer the reader to \cite{IK} for a detailed definition and
discussion.

The fractional Dehn twist coefficient is related to  Dehornoy's floor as
follows. (Note that unlike Dehornoy's floor, FDTC is invariant under
conjugation and thus gives an invariant of a {\em closed} braid.)

\begin{lemma} \cite{Ma} \label{CvsD}  Let  $\beta \in B_m$ be a braid with
fractional Dehn twist coefficient $C$. Then 
$$
  \lfloor \beta \rfloor_{D} +1  \geq  C \geq \lfloor \beta \rfloor_{D} \quad
\text{ and  } \quad C  = \lim_{n \to \infty} \frac{\lfloor \beta^n
\rfloor_{D}}{n}.  
$$
\flushright \qed
\end{lemma}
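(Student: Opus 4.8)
The plan is to follow Malyutin's argument \cite{Ma}, organized around the fact that, with the present conventions, $C=T\circ\Theta\colon B_{n}\to\RR$ is a homogeneous quasimorphism, together with the geometric realization of Dehornoy's ordering as the action of $\Theta(\beta)$ on the circle at infinity. I will freely use the standard properties of the translation number: $T$ is monotone (if $g(x)\le h(x)$ for all $x$, then $T(g)\le T(h)$), homogeneous ($T(h^{N})=N\,T(h)$), satisfies $T(h^{-1})=-T(h)$, and is normalized by $T(\tau)=1$ where $\tau(x)=x+1$; since $\tau$ is central, $T(h\tau^{k})=T(h)+k$. I will also use that $\Theta$ is a homomorphism (the Nielsen action) and that $\Theta(\Delta^{2})=\tau$, both of which are built into the construction in Section~\ref{FDTC}.

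\emph{Step 1: reduction to the positive cone.} Put $m=\lfloor\beta\rfloor_{D}$, so $\Delta^{2m}\preceq\beta\prec\Delta^{2m+2}$. Because Dehornoy's order is left-invariant and $\Delta^{2}$ is central, $\Delta^{2m}\preceq\beta$ is equivalent to $\beta\Delta^{-2m}\succeq 1$, and $\beta\prec\Delta^{2m+2}$ is equivalent to $(\beta\Delta^{-2m-2})^{-1}\succ 1$. Since $\Theta(\beta\Delta^{-2m})=\Theta(\beta)\tau^{-m}$, we get $T(\Theta(\beta\Delta^{-2m}))=C-m$, and likewise $T(\Theta((\beta\Delta^{-2m-2})^{-1}))=m+1-C$. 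Hence the two inequalities $C\ge m$ and $C\le m+1$ both follow at once from the single claim
\begin{equation}\label{eq:CvsDkey}
\gamma\succ 1\ \text{in Dehornoy's order}\ \Longrightarrow\ T(\Theta(\gamma))\ge 0 .
\end{equation}

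\emph{Step 2: proof of \eqref{eq:CvsDkey} --- the main point.} First, the Dehornoy-positive cone $P=\{\gamma:\gamma\succ 1\}$ is a sub-semigroup of $B_{n}$: if $\gamma_{1},\gamma_{2}\succ 1$, then by left-invariance $\gamma_{1}\gamma_{2}\succ\gamma_{1}\succ 1$. In particular $\gamma^{N}\succ 1$ for all $N\ge 1$. Next I invoke the geometric interpretation of Dehornoy's order \cite{FGRRW, De}: choosing the basepoint $*$ at the left endpoint of the $x$-axis, positivity of $\gamma$ is detected at the distinguished boundary point $t_{0}\in\d\bar{\D}\setminus\ti C\equiv\RR$ which is the ideal endpoint of the lift, based at $\ti{*}$, of the (taut) $x$-axis --- concretely, $\gamma\succ 1$ forces $\Theta(\gamma)(t_{0})\ge t_{0}$, this being exactly the statement that the image of the $x$-axis veers weakly to the right of the $x$-axis at the left endpoint, transported to the universal cover. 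Applying this to each $\gamma^{N}$ gives $\Theta(\gamma)^{N}(t_{0})=\Theta(\gamma^{N})(t_{0})\ge t_{0}$, and therefore
\[
T(\Theta(\gamma))=\lim_{N\to\infty}\frac{\Theta(\gamma)^{N}(t_{0})-t_{0}}{N}\ge 0 ,
\]
which proves \eqref{eq:CvsDkey} and hence the chain $\lfloor\beta\rfloor_{D}\le C\le\lfloor\beta\rfloor_{D}+1$. The step requiring the most care is precisely this geometric input: the $x$-axis runs through the punctures, which are cusps (ideal points) in the hyperbolic model, so one must pin down the point $t_{0}$ carefully and match the algebraic notion of a $\sigma$-positive word with the dynamical ``veering to the right'' condition --- this equivalence is the content of \cite{FGRRW} (see also \cite{De}) --- while also checking that the sign conventions (which direction is ``right'', and $\Theta(\Delta^{2})=\tau$ rather than $\tau^{-1}$) agree with those fixed in Section~\ref{FDTC}.

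\emph{Step 3: the limit formula.} Since $\Theta$ is a homomorphism and $T$ is homogeneous, $C(\beta^{n})=T(\Theta(\beta)^{n})=n\,T(\Theta(\beta))=n\,C(\beta)$. Feeding $\beta^{n}$ into the inequality just proved gives $\lfloor\beta^{n}\rfloor_{D}\le n\,C(\beta)\le\lfloor\beta^{n}\rfloor_{D}+1$, so $C(\beta)-\tfrac1n\le\tfrac1n\lfloor\beta^{n}\rfloor_{D}\le C(\beta)$, and letting $n\to\infty$ yields $C=\lim_{n\to\infty}\tfrac1n\lfloor\beta^{n}\rfloor_{D}$. Everything in Steps 1 and 3 is formal manipulation of the translation-number quasimorphism; the whole weight of the proof sits in the comparison \eqref{eq:CvsDkey}.
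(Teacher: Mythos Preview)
The paper does not actually prove this lemma: it is quoted from Malyutin \cite{Ma} and closed immediately with \qed, so there is no in-paper argument to compare your proposal against. Your write-up is a correct reconstruction of Malyutin's proof. The reductions in Steps~1 and~3 are the standard translation-number formalism, and the substantive content is exactly your claim \eqref{eq:CvsDkey}, which amounts to the identification of Dehornoy's order with a particular Nielsen--Thurston ordering on $B_n$: there is a point $t_0\in\partial\bar{\D}\setminus\ti C$ (determined by the lifted $x$-axis) such that $\gamma\succeq 1$ iff $\Theta(\gamma)(t_0)\ge t_0$, which is precisely the geometric interpretation of \cite{FGRRW} transported to the boundary circle at infinity. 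Your caution about the $x$-axis passing through the cusps is warranted; one tidy way to pin down $t_0$ is to take the endpoint of the lift of the first arc $[*,x_1]$ (a parabolic fixed point), and then observe that a $\sigma_1$-free word fixes that arc and hence $t_0$, while a $\sigma_1$-positive word moves the arc strictly to the right and hence $\Theta(\gamma)(t_0)>t_0$, so in either case $\Theta(\gamma)(t_0)\ge t_0$ as needed.
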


\begin{proof}[Proof of Theorem \ref{C>1}] The result follows immediately from 
Theorem \ref{D>1} and Lemma \ref{CvsD}, since any  $\beta \in B_m$ representing
a  closed braid with fractional Dehn twist coefficient $C>1$
has $\lfloor \beta \rfloor_{D} \geq 1$. (If $\lfloor \beta \rfloor_{D} \leq 0$,
then $C \leq 1$.)
\end{proof}

\section{Higher-order simple covers}

We have seen that the properties of contact manifolds arising as double covers
of $S^3, \xi_{std}$ branched over transverse braids often reflect the
properties of the branch locus. Indeed, quasipositive braids give rise to Stein
fillable covers, while non-right-veering braids have overtwisted covers. 
In this section we turn attention to higher-order covers. If one considers 
cyclic covers, the same properties hold: 

\begin{theorem} Let $(\Sigma_n(K), \xi_n)$ be the $n$-fold cover of $(S^3,
\xi_{std})$ branched over a transverse link $K$. Then 

1) If $K$ is represented by a quasipositive braid, $(\Sigma_n(K), \xi_n)$ is
Stein fillable.

2) If $K$ is a non-right-veering transverse link, $(\Sigma_n(K), \xi_n)$ is
overtwisted.
\end{theorem}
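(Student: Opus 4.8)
The plan is to reduce both statements to known facts about open books, following the same philosophy used in Propositions \ref{branch} and \ref{3braid-covers}. The $n$-fold cyclic branched cover $\Sigma_n(K)$ carries a natural open book decomposition $(S_n, \phi_n)$: the page $S_n$ is the $n$-fold cyclic cover of the disk $\D$ branched over the $m$ marked points $Q$, and the monodromy $\phi_n$ is the lift of the braid monodromy $\sigma \in \mathrm{Map}(\D, Q)$. (One should first check that a braid monodromy always lifts; this holds because $\sigma$ fixes $\d\D$ pointwise and acts trivially on $H_1$ of the relevant cover, so the connected cyclic cover determined by the linking-number homomorphism is preserved.) The contact structure $\xi_n$ is the one supported by this open book, by the usual Giroux-correspondence description of contact structures on branched covers of $(S^3, \xi_{std})$.

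For part (1), if $K$ is represented by a quasipositive braid $\sigma = \prod w_j \sigma_{i_j} w_j^{-1}$, then each factor $w_j \sigma_{i_j} w_j^{-1}$ is a positive (non-standard) half-twist in $(\D, Q)$ about some embedded arc. Its lift to the $n$-fold branched cover $S_n$ is a product of positive Dehn twists: concretely, a positive half-twist exchanging two adjacent marked points lifts, under the $z \mapsto z^n$ local model near the branch locus, to a product of positive Dehn twists about the $\gcd$-many lifts of the core curve (this is the standard computation, e.g. as in the lifts of half-twists used throughout low-dimensional topology). Hence $\phi_n$ is itself a product of positive Dehn twists, so $(S_n, \phi_n)$ supports a Stein fillable contact structure; this is exactly $\xi_n$.

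For part (2), suppose $K$ is non-right-veering. By Corollary \ref{non-rv-form} (or just by the definition and Proposition \ref{sigma-negative}), choose a braid representative $\sigma$ and an arc $a$ in $(\D, Q)$ whose image $\sigma(a)$, pulled taut, lies strictly to the left of $a$ with no bigons between them. Lift $a$ to an arc $\alpha$ in the page $S_n$ starting at a lift of the basepoint on $\d S_n$; then $\phi_n(\alpha)$ is the corresponding lift of $\sigma(a)$ and lies to the left of $\alpha$. The only subtlety is to rule out bigons between $\alpha$ and $\phi_n(\alpha)$ upstairs; this is handled exactly as in the proof of Proposition \ref{branch} — a bigon upstairs either covers a bigon downstairs (contradicting that $a$ and $\sigma(a)$ are pulled taut) or contains a branch point, in which case the local $z \mapsto z^n$ model and the resulting cyclic symmetry force the bigon to decompose into pieces each covering a bigon below, again a contradiction. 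Therefore $\phi_n$ is non-right-veering, and by the Honda–Kazez–Matić theorem \cite{HKM} an open book with non-right-veering monodromy supports an overtwisted contact structure; hence $(\Sigma_n(K), \xi_n)$ is overtwisted.

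The main obstacle is the bigon analysis in the branched-cover setting of part (2): the $z\mapsto z^n$ model means a branch point can be surrounded by up to $n$ sheets, and one must argue carefully that the covering involution (or $\ZZ/n$-action) near the branch point forces both $\alpha$ and $\phi_n(\alpha)$ through the branch point and splits any would-be innermost bigon into sub-bigons that descend. A secondary point worth spelling out is that the arcs $\alpha, \phi_n(\alpha)$ have endpoints on $\d S_n$ (not at punctures), so ``left of'' and right-veering are used in the open-book sense of \cite{HKM}; the translation between the braid notion (arcs to punctures) and the open-book notion (arcs boundary-to-boundary) is the same ``doubling'' remark already invoked after Proposition \ref{vanish}, and it should be recalled here so that the cited Honda–Kazez–Matić result applies verbatim.
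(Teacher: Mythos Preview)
Your approach differs substantially from the paper's. The paper's proof is a one-line citation: both statements are already proved in \cite{HKP} (Theorem~1.3 and Proposition~4.2 there), and the only thing added here is the observation, via Corollary~\ref{non-rv-form}, that ``non-right-veering'' is the same as ``admits a braid word with a row of $\sigma_1^{-1}$'s and no $\sigma_1$'', which is the hypothesis actually used in \cite{HKP}. So the paper does not re-run any open-book or bigon argument at all.

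Your direct lifting argument for part~(1) is the standard one and is fine: a positive half-twist lifts to a product of positive Dehn twists in any cyclic branched cover, so quasipositive monodromy lifts to a positive factorization.

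For part~(2), however, there is a genuine gap in the way you have set things up. The argument of Proposition~\ref{branch} relies on a special feature of the \emph{double} cover: an arc $a$ from $\partial\D$ to a puncture lifts, under the local model $z\mapsto z^2$, to a single smooth properly embedded arc $\alpha$ in $S$ with \emph{both} endpoints on $\partial S$ (the two local lifts of $a$ near the branch point fit together into one arc). For $n>2$ this fails: under $z\mapsto z^n$ the preimage of $a$ near the branch point consists of $n$ rays meeting at a single interior point, so a chosen lift $\alpha$ of $a$ is an arc from $\partial S_n$ to an interior point of $S_n$, not a properly embedded boundary-to-boundary arc. Thus $\alpha$ is not an arc to which the right-veering criterion of \cite{HKM} applies, and your sentence ``the arcs $\alpha,\phi_n(\alpha)$ have endpoints on $\partial S_n$'' is simply false for $n>2$ in the setup you describe. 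The $\ZZ/n$-symmetry argument you sketch for the bigon analysis (``forces both $\alpha$ and $\phi_n(\alpha)$ through the branch point and splits the bigon'') is therefore aimed at the wrong object.

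The fix you allude to---double $a$ first in $(\D,Q)$ to get a boundary-to-boundary arc $a'$ avoiding the punctures, and \emph{then} lift---does produce a proper arc in $S_n$, but then your bigon analysis must be redone from scratch: the lifts of $a'$ and $\sigma(a')$ now avoid the branch points, so the relevant question is whether an innermost bigon upstairs can \emph{enclose} a branch point without its sides passing through it, and the $z\mapsto z^n$ local model no longer forces the arcs through that point. This can be handled, but it is not the argument you wrote. In short, either carry out the doubled-arc version carefully, or (as the paper does) invoke \cite{HKP}, whose proof exploits the explicit $\sigma_1$-negative braid word directly rather than a general right-veering lift.
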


\begin{proof} The statements above are essentially contained in
\cite[Theorem 1.3, Proposition 4.2]{HKP};
while Proposition 4.2 in \cite{HKP} is stated in terms of braids ``with a row of
negative crossings'', by Corollary \ref{non-rv-form} this is equivalent to
non-right-veering.
\end{proof}

The situation changes dramatically if we consider {\em simple} covers instead
of {\em cyclic} ones. It is known that any contact 3-manifold can be
represented as a 3-fold cover of $(S^3, \xi_{std})$ branched over some
transverse link $K$; thus 3-fold simple covers are of particular interest. 
We show that in this setting, positivity and right-veering properties of the
branch locus have no bearing on the properties of the cover:

\begin{theorem} Any contact 3-manifold can be represented both as a 3-fold
simple cover of a positive braid and a 3-fold
simple cover of a negative braid.
\end{theorem}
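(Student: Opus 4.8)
The plan is to start from the Montesinos--type presentation of an arbitrary contact $3$-manifold $(Y,\xi)$ as a $3$-fold simple cover of $(S^3,\xi_{std})$ branched along some transverse link $K$; this is the statement quoted in the text just above, so I may invoke it. Fix such a cover with branch locus a closed braid $\beta \in B_m$. The goal is to modify $\beta$, without changing the branched cover $(Y,\xi)$, first into a positive braid and then (in a separate argument) into a negative braid. The tool for changing $\beta$ while controlling the cover is \emph{stabilization along an arc} as in the proof of Proposition~\ref{prop-stab-rv}: adding a strand $x_{m+1}$ together with a positive half-twist supported near an arc from $x_{m+1}$ to some $x_j$ realizes a positive Markov stabilization composed with a conjugation, hence changes neither the transverse link type nor — crucially — the branched $3$-fold simple cover, provided the new strand is assigned a transposition so that the covering monodromy representation $B_m \to S_3$ extends consistently. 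I would spell out exactly this: a $3$-fold simple cover is determined by a conjugacy class of homomorphism $\rho\colon \pi_1(S^3\setminus K)\to S_3$ sending meridians to transpositions, equivalently a labeling of the strands/generators of $\beta$ by transpositions; stabilizing along an arc that connects the new puncture to a puncture already labeled by a given transposition, and labeling the new strand by that same transposition, produces a new branch locus whose cover is canonically identified with the old one.

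The core of the argument is then a normal-form statement: any closed braid $\beta$ (with a fixed $S_3$-labeling making the cover connected) can be turned, by a finite sequence of arc-stabilizations and conjugations compatible with the labeling, into a \emph{positive} braid word. The mechanism is to attack the negative crossings one at a time. A crossing $\sigma_i^{-1}$ between two strands labeled by transpositions $\tau,\tau'$ can be "canceled" after introducing, via an arc-stabilization, an auxiliary strand labeled so that a suitable braid relation lets us trade $\sigma_i^{-1}$ for positive generators plus one extra full twist absorbed against the new strand — concretely, one inserts a positively-twisted parallel strand (as in Figure~\ref{stab-arcs}) and then slides the negative crossing off along it, turning $\sigma_i^{-1}$ into a conjugate of positive generators at the cost of extra positive crossings and one more strand. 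Iterating over all negative crossings yields a positive braid representative of the same cover. For part~(2), I would run the mirror-image construction: stabilize along \emph{negatively}-twisted arcs and push positive crossings off to convert every $\sigma_i$ into negative generators. Here one must check the $S_3$-labeling can still be arranged so that the cover is unchanged; since a negative half-twist along an arc is a conjugation composed with a \emph{negative} Markov stabilization, and a negatively stabilized braid is a different transverse link but can represent the \emph{same} branched cover (the covering $3$-manifold with its contact structure is insensitive to the sign of the Markov stabilization, only to the induced open-book-like decomposition upstairs), this step goes through — though the transverse link type downstairs is no longer preserved, which is fine because the theorem only asserts something about the $3$-manifold and the branch being positive/negative, not about a fixed transverse link.

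The main obstacle I anticipate is precisely the bookkeeping of the $S_3$-labeling under these arc-stabilizations: I must guarantee at each step that (i) the new strand can be assigned a transposition for which the relevant braid relation holds at the level of the permutation representation (so that the monodromy $\pi_1(S^3\setminus K)\to S_3$ genuinely extends), and (ii) the resulting cover is connected and carries the \emph{same} contact structure, not merely the same smooth $3$-manifold. For (ii) the cleanest route is to phrase everything via open books: a $3$-fold simple cover of $(S^3,\xi_{std})$ branched over a closed braid inherits an open book whose page is the $3$-fold cover of the disk branched over the braid points and whose monodromy is the lift of the braid monodromy; arc-stabilization of $\beta$ lifts to a Giroux (positive) stabilization of this open book when the new arc is positively twisted, hence preserves $(Y,\xi)$ by the Giroux correspondence, and conversely negatively-twisted arc-stabilization still produces an open book supporting $(Y,\xi)$ up to the ambiguity that a full boundary twist can be absorbed. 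So the plan reduces to: (a) recall the open-book description of the branched cover; (b) show positivity/negativity of the braid can be achieved by arc-stabilizations that correspond to positive (resp. stabilization-type) modifications of the open book; (c) invoke Giroux's theorem to conclude the contact structure is unchanged. I expect step (b), making the negative crossings disappear while keeping the labeling coherent, to be where all the real work lies.
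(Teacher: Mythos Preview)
Your proposal has a genuine gap, most seriously in the negative-braid direction. You claim that ``the covering $3$-manifold with its contact structure is insensitive to the sign of the Markov stabilization.'' This is false: a negative Markov stabilization of the branch locus lifts to a \emph{negative} stabilization of the open book upstairs, which changes the supported contact structure (typically making it overtwisted). Your hedge that ``negatively-twisted arc-stabilization still produces an open book supporting $(Y,\xi)$ up to the ambiguity that a full boundary twist can be absorbed'' is not correct and cannot be made so. Thus your scheme of negatively stabilizing to push positive crossings off does not preserve $(Y,\xi)$, and the negative-braid half of the theorem is not proved.

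The positive-braid direction is also not established. Arc-stabilization adds a strand and a positive crossing at the top of the braid; you assert one can then ``slide the negative crossing off'' and ``trade $\sigma_i^{-1}$ for positive generators,'' but no braid relation is given that accomplishes this, and adding strands does not by itself create relations that kill a fixed $\sigma_i^{-1}$ in the interior of the word. The paper's mechanism is entirely different and does not use stabilization at all: it invokes a specific \emph{local tangle replacement} (from \cite[Figure 24.3]{PS}) that, for the labeling $(12,13)$, exchanges two parallel strands for three positive crossings without changing the $3$-fold simple cover. The contact refinement is then a one-line check: each local tangle, closed up, has branched cover equal to $(S^3,\xi_{std})$, so the replacement swaps a Darboux ball for a Darboux ball. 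Inserting triples of positive crossings makes any braid positive; running the move backward inserts triples of negative crossings and makes any braid negative. No strands are added, no Giroux correspondence is needed, and --- crucially --- the same move handles both signs symmetrically, avoiding the negative-stabilization trap your argument falls into.
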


\begin{proof} If we ignore the contact structures, the counterpart of this
statement is known in low-dimensional topology: any 3-manifold can be
represented as a 3-fold simple cover of a positive braid (or of a negative
braid). The proof (see eg \cite[Theorem 25.2, Step 2]{PS}) is based on the fact
that the branch locus can be modified in certain ways without affecting the
3-fold simple branched cover. One of the transformations \cite[Figure 24.3]{PS}
replaces two parallel strands in the braid by three positive crossings,
provided that the branching pattern is as shown in Figure \ref{branch-locus}.
(As usual, the numbers indicate how the sheets of the cover meet the branch
locus; for example, one passes from sheet 1 to sheet 2 after going along a
small loop around a given strand.) We now  observe that the same proof
goes through in presence of contact structures, because the branched double 
cover corresponding to each of the transverse tangles in Figure
\ref{branch-locus} is a Darboux ball. (Namely, we close up these tangles to
obtain the trivial braid on 2 strands and the braid $\sigma_1^3$; both
3-fold simple branched covers are the standard contact 3-spheres. Indeed, 
following the discussion of  \cite[Figure 23.7]{PS}, one sees that the cover
corresponding to the braid $\sigma_1^3$ in Figure \ref{branch-locus} is given
by an open book with a disk page. This discussion is compatible with contact
structures, so the cover is $(S^3, \xi_{std})$.) 

\begin{figure}[htb]
\bigskip
	\labellist
	\small\hair 2pt
	\pinlabel $12$ at 130 30
	\pinlabel $13$ at 130 100
	\pinlabel $12$ at 200 30
	\pinlabel $13$ at 200 100
	\pinlabel $12$ at 440 30
	\pinlabel $13$ at 440 100
	\pinlabel $23$ at 275 100
	\endlabellist
\centering	
  \includegraphics[scale=0.7]{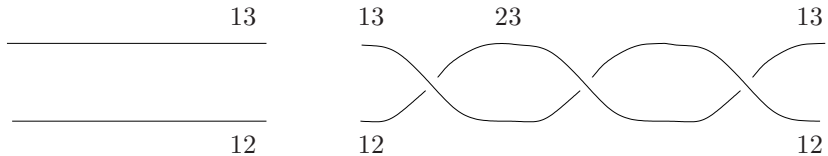}
  
  \caption{Modification of the branch locus for a 3-fold simple cover.}
  \label{branch-locus}
\end{figure}

Inserting triples of positive crossings into a given transverse braid, we can
obviously convert it into a positive braid. The branched double cover will
remain the same contact manifold during this procedure, so we have shown that
any contact manifold is a 3-fold simple cover of a positive braid. Performing
the same modification backwards, we can insert a triple of negative crossings
into any branch locus (without changing the cover); this shows that any contact
3-manifold is a cover of $(S^3, \xi_{std})$ branched over a negative braid. 
\end{proof}

\newpage

\end{document}